\documentclass[oneside,11pt]{amsart}


\setlength{\vfuzz}{2mm}
\setlength{\textheight}{210mm}

\usepackage{amssymb, mathrsfs, amscd}
\usepackage[all]{xy}

\usepackage{graphics}
\usepackage{graphicx}
\usepackage{enumerate}

\newtheorem*{thma}{Theorem A}
\newtheorem*{thmb}{Theorem B}

\newtheorem{theorem}{Theorem}[section]
\newtheorem{definition}[theorem]{Definition}
\newtheorem{lemma}[theorem]{Lemma}

\newtheorem{proposition}[theorem]{Proposition}
\newtheorem{corollary}[theorem]{Corollary}

\newtheorem{notation}[theorem]{Notation}

\author{Dieter Degrijse}
\address{Department of Mathematics, K.U.Leuven, Kortrijk, Belgium}%
\email{Dieter.Degrijse@kuleuven-kortrijk.be}%

\author{Nansen Petrosyan}
\address{Department of Mathematics, K.U.Leuven, Kortrijk, Belgium}%
\email{Nansen.Petrosyan@kuleuven-kortrijk.be}%
\thanks{Both authors were supported by the Research Fund K.U.Leuven.}
\thanks{The second author was also supported by the FWO-Flanders Research Fellowship.}

\title[]{Commensurators and classifying spaces  with virtually cyclic stabilizers}
\date{\today}
\newcommand{\mF}{\mathfrak {F}}
\newcommand{\Z}{\mathbb Z}
\newcommand{\G}{\Gamma}
\newcommand{\orb}{\mathcal{O}_{\mF}G}
\newcommand{\orbmod}{\mbox{Mod-}\mathcal{O}_{\mF}G}

\bigskip
\begin{document}
\maketitle
\begin{abstract} By examining commensurators of virtually cyclic groups, we show that for each natural number $n$, any locally finite-by-virtually cyclic group of cardinality $\aleph_n$ admits a finite dimensional classifying space with virtually cyclic stabilizers of dimension  $n+3$. As a corollary, we prove that every elementary amenable group of finite Hirsch length and cardinality $\aleph_n$ admits a finite dimensional classifying space with virtually cyclic stabilizers.
\end{abstract}
\section{Introduction}
A classifying space of a discrete group $G$ for a family of subgroups $\mathfrak{F}$ is a terminal object in the homotopy category of $G$-CW complexes with stabilizers in $\mathfrak{F}$ (see \cite{tom}). Such a space is also called a model for $E_{\mathfrak{F}}G$.  Even though a model for $E_{\mathfrak{F}}G$ always exists,  in general, it need not be finite dimensional. When $\mathfrak{F}$ is the family of virtually cyclic subgroups of $G$, $E_{\mathfrak{F}}G$ is denoted by $\underline{\underline{E}}G$. Questions concerning finiteness properties of $\underline{\underline{E}}G$, such as whether for a given type of group $G$ there exists a finite dimensional model for $\underline{\underline{E}}G$, have been particularly  motivated by Farell-Jones Isomorphism conjecture (see \cite{FJ} and \cite{DL}). More recently, this area has gathered  interest on its own.

Finite dimensional models for $\underline{\underline{E}}G$ are known to exists for several interesting classes of groups: e.g. word-hyperbolic groups (Juan-Pineda, Leary, \cite{LearyPineda}), relative hyperbolic groups (Lafont, Ortiz, \cite{LafontOrtiz}), virtually polycyclic groups (L\"{u}ck,Weiermann, \cite{LuckWeiermann}) and $\mathrm{CAT}(0)$-groups (L\"{u}ck, \cite{Luck3}). \\
\indent In a recent preprint \cite{FluchNucinkis}, Fluch and Nucinkis ask whether an elementary amenable group $\Gamma$ of finite Hirsch length has a finite dimensional model for $\underline{\underline{E}}\Gamma$. They give a positive answer to this question in the case where $\Gamma$ has a bound on the order of its finite subgroups. From the structural results of Hillman and Linnel (see \cite{HillmannLinnel}) and  Wehrfritz  (see \cite{Wehrfritz}) it follows that every elementary amenable group of finite Hirsch length $\Gamma$ is locally finite-by-virtually solvable where the virtually solvable group has a further decomposition.   Using this structure, Fluch and Nucinkis reduce the problem to torsion-free nilpotent-by-torsion-free abelian groups, and prove that these groups admit finite dimensional classifying spaces with virtually cyclic stabilizers.

In order to solve the problem for arbitrary  elementary amenable groups of finite Hirsch length, one  needs to show the existence of finite dimensional models for classifying spaces with virtually cyclic stabilizers of locally finite-by-virtually cyclic groups.

Let us denote the minimal dimension of a model for $\underline{\underline{E}}G$  by $\underline{\underline{\mathrm{gd}}}(G)$. We prove
\begin{thma}{\label{main}} Let $\Gamma$ be a group  of cardinality $\aleph_n$, for some natural number $n$. Suppose $\Gamma$ is an extension of an infinite locally finite group by a virtually cyclic group.  Then
\[  n+1 \leq \underline{\underline{\mathrm{gd}}}(\Gamma) \leq n+3.\]
\end{thma}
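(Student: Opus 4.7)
My plan is to prove the two inequalities separately: the lower bound by restriction to the locally finite subgroup, and the upper bound by combining L\"uck--Weiermann's pushout construction with a careful analysis of commensurators of infinite virtually cyclic subgroups.

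For the lower bound, I would restrict any finite dimensional model for $\underline{\underline{E}}\Gamma$ to the infinite locally finite subgroup $L\leq\Gamma$ appearing in the extension. Since every virtually cyclic subgroup of a locally finite group is finite, this restriction is a model for $\underline{E}L$. The classical lower bound, namely that any infinite locally finite group of cardinality $\aleph_n$ has $\underline{\mathrm{gd}}\geq n+1$, then yields $\underline{\underline{\mathrm{gd}}}(\Gamma)\geq n+1$.

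For the upper bound, I would first establish that $\underline{\mathrm{gd}}(\Gamma)\leq n+2$ using an extension-type argument: combine the bound $\underline{\mathrm{gd}}(L)\leq n+1$ with $\underline{\mathrm{gd}}(Q)\leq 1$ via L\"uck's extension theorem for proper classifying spaces. Then I would apply the L\"uck--Weiermann pushout theorem to the family $\mathcal{VC}$ with subfamily $\mathcal{F}in$ and the commensurability relation on infinite virtually cyclic subgroups, which produces
\[
\underline{\underline{\mathrm{gd}}}(\Gamma) \leq \max\Bigl\{\underline{\mathrm{gd}}(\Gamma),\ 1+\sup_{[V]}\mathrm{gd}_{\mathcal{F}[V]}(N_\Gamma[V])\Bigr\},
\]
where $[V]$ ranges over commensurability classes of infinite virtually cyclic subgroups and $\mathcal{F}[V]$ denotes the family of subgroups of the commensurator $N_\Gamma[V]$ that are either finite or commensurate with $V$. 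To bound the supremum, the key observation is that $N_\Gamma[V]$ again fits in a short exact sequence $1\to L\cap N_\Gamma[V]\to N_\Gamma[V]\to Q'\to 1$ with $L\cap N_\Gamma[V]$ locally finite of cardinality $\leq\aleph_n$ and $Q'\leq Q$ virtually cyclic, so the same extension argument gives $\underline{\mathrm{gd}}(N_\Gamma[V])\leq n+2$; since $\mathcal{F}[V]\supseteq\mathcal{F}in$, we obtain $\mathrm{gd}_{\mathcal{F}[V]}(N_\Gamma[V])\leq n+2$ as well. Substituting yields $\underline{\underline{\mathrm{gd}}}(\Gamma) \leq \max\{n+2,\ n+3\} = n+3$.

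The main obstacle is establishing the extension-type bound $\underline{\mathrm{gd}}(H)\leq n+2$ for \emph{arbitrary} locally-finite-by-virtually-cyclic groups $H$ of cardinality $\aleph_n$---both for $H=\Gamma$ and for $H=N_\Gamma[V]$. A direct application of L\"uck's extension theorem requires controlling how the virtually cyclic quotient acts on the conjugacy classes of finite subgroups of the locally finite kernel, and it is precisely this control that demands the structural analysis of commensurators of virtually cyclic subgroups announced in the paper's title. Once this key technical point is handled, the L\"uck--Weiermann pushout combined with the bounds above delivers the stated $n+3$.
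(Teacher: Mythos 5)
Your lower bound argument is fine and matches the paper. The upper bound, however, contains a genuine error at its crux: you claim that since $\mathcal{F}[V]\supseteq\mathcal{F}in$, the bound $\underline{\mathrm{gd}}(N_\Gamma[V])\leq n+2$ automatically gives $\mathrm{gd}_{\mathcal{F}[V]}(N_\Gamma[V])\leq n+2$. Geometric dimension is not monotone decreasing under enlarging the family: a model for $E_{\mathcal{F}in}$ is not a model for $E_{\mathcal{F}[V]}$ (the fixed point sets of the infinite virtually cyclic subgroups in $\mathcal{F}[V]$ are empty, not contractible), and the dimension can strictly increase --- for instance $\underline{\mathrm{gd}}(\mathbb{Z}^2)=2$ while $\underline{\underline{\mathrm{gd}}}(\mathbb{Z}^2)=3$. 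So the hypothesis of the L\"uck--Weiermann pushout theorem concerning $\mathrm{gd}_{\mathcal{F}[V]}(N_\Gamma[V])$ is left unverified, and this is exactly the step the whole paper is organized around. (A smaller issue: your displayed form of the pushout inequality also suppresses the required condition $\underline{\mathrm{gd}}(N_\Gamma[V])\leq n+2$, though that one does hold.)

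You have also misplaced where the commensurator analysis enters. The bound $\underline{\mathrm{gd}}(H)\leq n+2$ for locally-finite-by-virtually-cyclic $H$ is the comparatively easy part (the paper's Lemma 4.1: pull back the finite subgroups of the virtually cyclic quotient to get an intermediate family $\mathfrak{H}$ with $\mathrm{gd}_{\mathfrak{H}}\leq 1$, note every member of $\mathfrak{H}$ is locally finite of cardinality at most $\aleph_n$ so has $\underline{\mathrm{gd}}\leq n+1$ by Dicks--Kropholler--Leary--Thomas, and apply L\"uck--Weiermann's Proposition 5.1); it needs no commensurators. What the commensurator analysis actually delivers is the missing bound above: after reducing to the torsion-by-$\mathbb{Z}$ case, $\mathrm{Comm}_\Gamma[V]$ is shown to be a semidirect product $T\rtimes_\theta\mathbb{Z}$ with $T$ locally finite and every element of $T$ having finite $\theta$-orbit, and moreover \emph{every} infinite virtually cyclic subgroup of the commensurator is equivalent to $V$, so that $\mathcal{F}[V]$ is the full family of virtually cyclic subgroups there. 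Such a group is a directed union of virtually cyclic subgroups, which gives $\mathrm{gd}_{\mathcal{F}[V]}(N_\Gamma[V])= n+1$ and lets the pushout close at $n+3$. Without an argument of this kind your proof does not go through.
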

The proof of this theorem relies on a push-out construction of L\"{u}ck and Weiermann and a careful analysis of the structure of commensurators of virtually cyclic subgroups inside torsion-by-infinite cyclic groups. \\
\indent By applying Theorem A together with the cited results on torsion-free nilpotent-by-torsion-free abelian groups and elementary amenable groups of finite Hirsch length, we obtain
\begin{thmb}{\label{elem}} Let $\Gamma$ be an elementary amenable group of finite Hirsch length $h$ and cardinality $\aleph_n$, for some natural number $n$. Then there exists a function $f:\mathbb N\to \mathbb N$ and a finite dimensional model for $\underline{\underline{E}}\Gamma$ of dimension at most $f(h)$.
\end{thmb}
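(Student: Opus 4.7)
The plan is to follow the reduction strategy of Fluch--Nucinkis, with Theorem A supplying the ingredient that disposes of their additional hypothesis of a bound on the orders of finite subgroups. By the Hillman--Linnel and Wehrfritz structure theorem recalled above, $\Gamma$ fits in a short exact sequence $1 \to L \to \Gamma \to Q \to 1$ with $L$ locally finite and $Q$ virtually solvable of the same Hirsch length $h$. Passing to a subgroup of finite index (which affects $\underline{\underline{\mathrm{gd}}}$ by a bounded amount), we may assume $Q$ is torsion-free nilpotent-by-torsion-free abelian and admits a finite subnormal series with infinite cyclic quotients. Pulling this back to $\Gamma$ yields a tower
\[ L = \Gamma_0 \lhd \Gamma_1 \lhd \cdots \lhd \Gamma_m = \Gamma \]
whose successive quotients are infinite cyclic, with $m$ bounded in terms of $h$.

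I would then argue by induction on $i$. For $i = 0$ the group $\Gamma_0 = L$ is locally finite, so every virtually cyclic subgroup of $L$ is finite and $\underline{\underline{E}}\Gamma_0 = \underline{E}\Gamma_0$; a finite dimensional model then exists by the classical theory for locally finite groups of cardinality $\aleph_n$. For $i = 1$, the group $\Gamma_1$ is locally finite-by-virtually cyclic, and Theorem A delivers a finite dimensional model for $\underline{\underline{E}}\Gamma_1$. For $i \geq 2$, I would use a L\"uck--Weiermann push-out to pass from $\underline{\underline{E}}\Gamma_{i-1}$ to $\underline{\underline{E}}\Gamma_i$, glueing together a proper model $\underline{E}\Gamma_i$ (built inductively from the tower) with models attached at each commensurability class of maximal infinite virtually cyclic subgroups of $\Gamma_i$.

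The main obstacle is showing that the normalizer--quotient groups associated to these commensurability classes admit finite dimensional models of controlled dimension. At levels $i \geq 2$ these groups are no longer locally finite-by-virtually cyclic, so Theorem A does not apply to them directly; instead one would argue inductively, using that each such quotient sits in an extension whose quotient is virtually polycyclic of strictly lower Hirsch length than $\Gamma_i$, and applying the Fluch--Nucinkis treatment of the torsion-free nilpotent-by-torsion-free abelian case wherever it applies. Tracking the dimension contributions of the locally finite base step (given by Theorem A) together with the bounded increase at each push-out step produces a recursive bound of the desired form $f(h)$.
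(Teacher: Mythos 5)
Your reduction to the Hillman--Linnell--Wehrfritz structure theorem and your use of Theorem A for the locally finite-by-virtually cyclic layer match the paper's starting point, but the tower you build and the induction you run on it both have genuine gaps. First, the tower itself need not exist: after passing to the finite index subgroup $M$, the quotient $M/\Lambda(\Gamma)$ is torsion-free nilpotent-by-free abelian of finite Hirsch length, but such groups need not admit a finite subnormal series with infinite cyclic quotients --- $\mathbb{Q}$ already has Hirsch length $1$ and no infinite cyclic quotients in any subnormal series. So the chain $L=\Gamma_0\lhd\Gamma_1\lhd\cdots\lhd\Gamma_m=\Gamma$ with infinite cyclic successive quotients is not available in general. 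Second, even where such a tower exists, the inductive step at levels $i\geq 2$ is exactly the hard point, and your sketch does not close it: the L\"uck--Weiermann push-out requires finite dimensional models for $E_{\mathfrak{F}[H]}\mathrm{N}_{\Gamma_i}[H]$ for every commensurability class $[H]$, and these commensurators need not have strictly smaller Hirsch length (in $\mathbb{Z}^2$ the commensurator of any infinite cyclic subgroup is all of $\mathbb{Z}^2$), nor are they locally finite-by-virtually cyclic, nor torsion-free nilpotent-by-torsion-free abelian, so neither Theorem A nor the Fluch--Nucinkis analysis applies to them and the induction does not reduce complexity.

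The paper sidesteps both problems by never iterating the push-out beyond the locally finite-by-virtually cyclic case. It applies the Mart\'{\i}nez-P\'{e}rez spectral sequence in Bredon cohomology to the single extension $1\to\Lambda(\Gamma)\to M\to M/\Lambda(\Gamma)\to 1$ (this is Corollary \ref{cor: ext of loc fin}), which gives $\underline{\underline{\mathrm{cd}}}(M)\leq \underline{\underline{\mathrm{cd}}}(M/\Lambda(\Gamma)) + \sup\underline{\underline{\mathrm{cd}}}(\pi^{-1}(V))$ over virtually cyclic $V\leq M/\Lambda(\Gamma)$. The preimages $\pi^{-1}(V)$ are precisely locally finite-by-virtually cyclic, so Theorem A bounds their contribution by $n+3$; the quotient $M/\Lambda(\Gamma)$ is handled by the already-established Fluch--Nucinkis bound $\underline{\underline{\mathrm{cd}}}(M/\Lambda(\Gamma))\leq g(h)$; and Theorem \ref{th: luck meintrup} converts the cohomological bound back into a geometric one. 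If you want to salvage a geometric/inductive argument along your lines, you would need to replace your tower by this algebraic passage through Bredon cohomological dimension, or else carry out a full commensurator analysis for each $\Gamma_i$ analogous to Section \ref{sec: comm}, which is substantially more than ``applying the Fluch--Nucinkis treatment wherever it applies.''
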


\section{Preliminaries} \label{sec: prelim}
Let $G$ be a discrete group and let $\mathfrak{F}$ be a family of subgroups of $G$, i.e. a collection of subgroups of $G$ that is closed under conjugation and taking subgroups. A \emph{classifying space of $G$ for the family $\mathfrak{F}$} is a $G$-CW-complex $X$ defined by  the properties that
 $X^H=\emptyset$ when $H \notin \mathfrak{F}$ and $X^H$ is contractible for every $H \in \mathfrak{F}$.

 A classifying space of $G$ for the family $\mathfrak{F}$ is also called a \emph{model for $E_{\mathfrak{F}}G$}. It can be shown that a model $X$ always exists and that it satisfies the following universal property: if $Y$ is a $G$-CW-complex such that all its stabilizer groups are contained in $\mathfrak{F}$, then there exists a $G$-equivariant map $Y \rightarrow X$ that is unique up to $G$-homotopy (e.g. see \cite{Luck2}, \cite{tom}). Note that this universal property implies that $X$ is unique up to $G$-homotopy. Equivalently, one could also say that a classifying space of a discrete group $G$ for a family of subgroups $\mathfrak{F}$ is a terminal object in the homotopy category of $G$-CW complexes with stabilizers in $\mathfrak{F}$ (e.g. see \cite{LuckMeintrup}).

The smallest possible dimension of a model for $E_{\mathfrak{F}}G$  is called the \emph{geometric dimension of $G$ for the family $\mathfrak{F}$} and  denoted by $\mathrm{gd}_{\mathfrak{F}}(G)$. When a finite dimensional model does not exist, then $\mathrm{gd}_{\mathfrak{F}}(G)$ is said to be infinite.

If $K$ is a subgroup of $G$, we can consider the family
\[ \mathfrak{F} \cap K= \{ K \cap H \ | \ H \in \mathfrak{F}\} \]
of subgroups of $K$. By restricting the action to $K$, any model for $E_{\mathfrak{F}}G$ becomes a model  for $E_{\mathfrak{F}\cap K}K$. This implies that $\mathrm{gd}_{\mathfrak{F} \cap K}(K) \leq \mathrm{gd}_{\mathfrak{F}}(G)$. When $K$ is a finite index subgroup of $G$, there is  a coinduction construction of  L\"{u}ck that entails the following
\begin{theorem}[{L\"{u}ck, \cite[2.4]{Luck1}}] \label{th: finite index} Let $\mathfrak{F}$ be either the family of finite subgroups of $G$ or the family of virtually cyclic subgroups of $G$. If $K$ is a subgroup of $G$ with finite index $[G:K]$, then
\[    \mathrm{gd}_{\mathfrak{F}}(G)  \leq  [G:K]\mathrm{gd}_{\mathfrak{F} \cap K}(K).   \]
\end{theorem}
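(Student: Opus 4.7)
The plan is to apply L\"uck's coinduction construction. Start with an optimal model $X$ for $E_{\mathfrak{F}\cap K}K$, of dimension $d=\mathrm{gd}_{\mathfrak{F}\cap K}(K)$, and form
\[ Y \;=\; \mathrm{map}_K(G,X), \]
the space of $K$-equivariant maps from $G$ (with the left $K$-action by multiplication) to $X$, equipped with the $G$-action $(g\cdot f)(g')=f(g'g)$. Writing $n=[G:K]$ and choosing left coset representatives $g_1,\ldots,g_n$ for $K$ in $G$, evaluation at these representatives is a homeomorphism $Y\cong X^n$ under which the $G$-action permutes the factors and twists them by elements of $K$. In particular, the product CW-structure endows $Y$ with the structure of a $G$-CW-complex of dimension at most $nd$.

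Next I would compute the fixed-point sets. For $H\leq G$, pick representatives $g_1,\ldots,g_r$ of the double cosets $K\backslash G/H$. An element $f\in Y$ lies in $Y^H$ precisely when it is right-$H$-invariant in addition to being left-$K$-equivariant, so it factors through $K\backslash G/H$ and is determined by its values $f(g_i)$; the relation $g_ih=(g_ihg_i^{-1})g_i$ for $h\in H$ combined with $K$-equivariance then forces $f(g_i)$ to be fixed by $g_iHg_i^{-1}\cap K$, and conversely any such choice extends uniquely. Therefore
\[ Y^H\;\cong\;\prod_{i=1}^{r} X^{g_iHg_i^{-1}\cap K}. \]

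If $H\in\mathfrak{F}$, then each $g_iHg_i^{-1}\cap K$ lies in $\mathfrak{F}\cap K$, so each factor is contractible and hence so is the finite product $Y^H$. Conversely, if $H\notin\mathfrak{F}$, then for each $i$ the subgroup $g_iHg_i^{-1}\cap K$ has index at most $[G:K]$ in the isomorphic copy $g_iHg_i^{-1}$ of $H$; since both the class of finite groups and the class of virtually cyclic groups are closed under finite-index overgroups, $g_iHg_i^{-1}\cap K\notin\mathfrak{F}\cap K$, so each factor is empty and $Y^H=\emptyset$. This shows $Y$ is a model for $E_{\mathfrak{F}}G$ of dimension at most $[G:K]\cdot\mathrm{gd}_{\mathfrak{F}\cap K}(K)$, as required.

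The main subtlety I anticipate lies in the first step: confirming that $Y$ genuinely inherits a $G$-CW-structure from the product identification $Y\cong X^n$ and that the $G$-action --- which permutes factors and twists by elements of $K$ --- is cellular with the expected dimension bound. The remaining work is essentially double-coset bookkeeping, and the specific form of $\mathfrak{F}$ enters only in the emptiness argument, through closure under finite-index overgroups.
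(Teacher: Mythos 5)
Your argument is correct and is essentially the coinduction construction of L\"uck that the paper itself invokes for this statement (the paper cites \cite[2.4]{Luck1} without reproducing the proof): forming $\mathrm{map}_K(G,X)$, identifying it $G$-equivariantly with a twisted product $X^{[G:K]}$, and computing fixed points via double cosets. The two points you isolate --- the $G$-CW structure on the product and the fact that both families are closed under finite-index overgroups (which is where the hypothesis on $\mathfrak{F}$ enters) --- are exactly the points on which the cited proof turns, so nothing essential is missing.
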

\indent A general scheme to construct a model for $E_{\mathfrak{F}}G$ is to start with a model for $E_{\mathfrak{H}}G$ where $\mathfrak{H}$ is a subfamily of $\mathfrak{F}$, and then try to adapt this model to obtain a model for $E_{\mathfrak{F}}G$. A nice example of such an approach is a construction of L\"{u}ck and Weiermann (see \cite[$\S 2$]{LuckWeiermann}), which we will use here.

Let $G$ be a discrete group and let $\mathfrak{F}$ and $\mathfrak{H}$ be families of subgroups of $G$ such that $\mathfrak{H} \subseteq \mathfrak{F}$ and such that there exists an equivalence relation $\sim$ on the set $\mathcal{S}=\mathfrak{F}\smallsetminus \mathfrak{H}$ that satisfies the following properties
\begin{itemize}
\item[-]  $\forall H,K \in \mathcal{S} : H \subseteq K \Rightarrow H \sim K$;
\item[-] $ \forall H,K \in \mathcal{S},\forall x \in G: H \sim K \Leftrightarrow H^x \sim K^x$.
\end{itemize}
An equivalence relation that satisfying these properties will be called a \emph{strong equivalence relation}.

Let $[H]$ be an equivalence class represented by $H \in \mathcal{S}$ and denote the set of equivalence classes by $[\mathcal{S}]$. $G$ acts on $[\mathcal{S}]$ via conjugation, and the stabilizer group of an equivalence class $[H]$ is
\[ \mathrm{N}_{G}[H]=\{x \in \Gamma \ | \ H^x \sim H \}. \]
Note that $\mathrm{N}_{G}[H]$ contains $H$ as a subgroup. Let  $\mathcal{I}$ be a complete set of representatives $[H]$ of the orbits of the conjugation action of $G$ on $[\mathcal{S}]$.
For each $[H] \in \mathcal{I}$,  define the  family of subgroups of $\mathrm{N}_{G}[H]$
\[ \mathfrak{F}[H]=\{ K \subseteq\mathrm{N}_{G}[H] \ | K \in \mathcal{S}, K \sim H\} \cup \Big(\mathrm{N}_{G}[H] \cap \mathfrak{H}\Big). \]
\begin{theorem}[{L\"{u}ck-Weiermann, \cite{LuckWeiermann}}] \label{cor: push out} Let $\mathfrak{H} \subseteq\mathfrak{F}$ be two families of subgroups of a group $G$ such that $\mathcal{S}=\mathfrak{F}\smallsetminus \mathfrak{H}$ is equipped with a strong equivalence relation. Denote the set of equivalence classes by $[\mathcal{S}]$ and let $\mathcal{I}$ be a complete set of representatives $[H]$ of the orbits of the conjugation action of $G$ on $[\mathcal{S}]$. If there exists a natural number $n$ such that for each $[H] \in \mathcal{I}$
\begin{itemize}
\item[-] $\mathrm{gd}_{\mathfrak{H}\cap \mathrm{N}_{G}[H]}(\mathrm{N}_{G}[H]) \leq n-1$;
\item[-] $\mathrm{gd}_{\mathfrak{F}[H]}(\mathrm{N}_{G}[H]) \leq n$,
\end{itemize}
and such that $\mathrm{gd}_{\mathfrak{H}}G \leq n$, then $\mathrm{gd}_{\mathfrak{F}}G \leq n$.
\end{theorem}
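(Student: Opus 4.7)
The plan is to realize $E_{\mathfrak{F}}G$ as a $G$-pushout that glues a chosen model for $E_{\mathfrak{H}}G$ with, for each $[H]\in\mathcal{I}$, induced copies of the two given $\mathrm{N}_G[H]$-models. Fix models $A_{[H]}=E_{\mathfrak{H}\cap\mathrm{N}_G[H]}(\mathrm{N}_G[H])$ and $B_{[H]}=E_{\mathfrak{F}[H]}(\mathrm{N}_G[H])$ realizing the dimensions $\leq n-1$ and $\leq n$ guaranteed by hypothesis, and $Z=E_{\mathfrak{H}}G$ of dimension $\leq n$. The universal property of classifying spaces furnishes an $\mathrm{N}_G[H]$-equivariant cellular map $A_{[H]}\to B_{[H]}$ and a $G$-equivariant cellular map $G\times_{\mathrm{N}_G[H]}A_{[H]}\to Z$. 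Let $X$ be the $G$-pushout
\[
\begin{array}{ccc}
\bigsqcup_{[H]\in\mathcal{I}} G\times_{\mathrm{N}_G[H]}A_{[H]} & \longrightarrow & Z \\
\downarrow & & \downarrow \\
\bigsqcup_{[H]\in\mathcal{I}} G\times_{\mathrm{N}_G[H]}B_{[H]} & \longrightarrow & X
\end{array}
\]
formed after replacing the upper horizontal arrow by a $G$-cofibration via the mapping cylinder. Since induction preserves CW-dimension and the cylinder adds at most one dimension to the $(n-1)$-dimensional upper-left corner, $\dim X\leq n$.

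It remains to check that $X^K$ has the correct homotopy type for every $K\leq G$, using the standard formula $(G\times_{\mathrm{N}_G[H]}Y)^K=\bigsqcup_{g\mathrm{N}_G[H]:\, K^g\subseteq\mathrm{N}_G[H]}Y^{K^g}$. If $K\notin\mathfrak{F}$, all three corners have empty $K$-fixed sets (their stabilizers lie in $\mathfrak{F}$), so $X^K=\emptyset$. If $K\in\mathfrak{H}$, then for each contributing coset $K^g\in\mathfrak{H}\cap\mathrm{N}_G[H]\subseteq\mathfrak{F}[H]$, so both $A_{[H]}^{K^g}$ and $B_{[H]}^{K^g}$ are contractible; the left vertical map is a weak equivalence on $K$-fixed points and the homotopy pushout collapses to $X^K\simeq Z^K\simeq\ast$. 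The delicate case is $K\in\mathcal{S}$: by the subgroup axiom the class $[K]\in[\mathcal{S}]$ is well-defined, and by the conjugation axiom its $G$-orbit is a unique $[H]\in\mathcal{I}$. Choosing $g_0\in G$ with $K^{g_0}\sim H$, the conjugation axiom also yields $\mathrm{N}_G[K^{g_0}]=\mathrm{N}_G[H]$ and $K^{g_0}\leq\mathrm{N}_G[H]$, and a short calculation shows that the cosets $g\mathrm{N}_G[H]$ with $K^g\sim H$ form the single orbit $\{g_0\mathrm{N}_G[H]\}$. Consequently the $Z$-corner and the $A_{[H]}$-corner of $X^K$ are empty, while the $B_{[H]}$-corner contributes only the contractible summand $B_{[H]}^{K^{g_0}}$, giving $X^K\simeq\ast$.

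The main obstacle is this last case, which hinges essentially on both axioms of the strong equivalence relation together with the coset-counting argument establishing uniqueness of the contributing orbit. The remaining ingredients -- dimension bookkeeping, the $G$-cofibrant replacement, and commutation of fixed points with pushouts along $G$-cofibrations -- are routine equivariant CW-theory. Applying the universal property, $X$ is then a model for $E_{\mathfrak{F}}G$, so $\mathrm{gd}_{\mathfrak{F}}G\leq\dim X\leq n$.
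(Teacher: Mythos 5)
The paper does not prove this statement---it quotes it as a theorem of L\"{u}ck--Weiermann from \cite{LuckWeiermann}---and your argument is a correct reconstruction of the proof given there: the $G$-pushout gluing $E_{\mathfrak{H}}G$ to the induced models $G\times_{\mathrm{N}_G[H]}E_{\mathfrak{F}[H]}(\mathrm{N}_G[H])$ along the $(n-1)$-dimensional spaces $G\times_{\mathrm{N}_G[H]}E_{\mathfrak{H}\cap\mathrm{N}_G[H]}(\mathrm{N}_G[H])$, with the fixed-point check split into the three cases $K\notin\mathfrak{F}$, $K\in\mathfrak{H}$, $K\in\mathcal{S}$ and the uniqueness of the contributing coset in the last case. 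Your dimension bookkeeping via the mapping cylinder is also the standard one, so the proposal matches the cited source's approach.
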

An important algebraic tool to study the equivariant cohomology and finiteness properties of classifying spaces for families of subgroups is Bredon cohomology.
Bredon cohomology was introduced by Bredon in \cite{Bredon} for finite groups and has been generalized to arbitrary groups by L\"{u}ck (see \cite{Luck}). Let us recall some basics of the theory.

Let $G$ be a discrete group and let $\mathfrak{F}$ be a family of subgroups of $G$. The \emph{orbit category} $\orb$ is a category defined by the objects that are the left cosets $G/H$ for all $H \in \mathfrak{F}$ and the morphisms are all $G$-equivariant maps between the objects.
An \emph{$\orb$-module} is a contravariant functor $M: \orb \rightarrow \mathbb{Z}\mbox{-mod}$. The \emph{category of $\orb$-modules} is denoted by $\orbmod$ and is defined by the objects that are all the $\orb$-modules and the morphisms are all the natural transformations between the objects.
It can be shown that $\orbmod$ contains enough projective and injective objects to construct projective and injective resolutions. Hence, one can construct bi-functors $\mathrm{Ext}^{n}_{\orb}(-,-)$ that have all the usual properties. The \emph{$n$-th Bredon cohomology of $G$} with coefficients $M \in \orbmod$ is by definition
\[ \mathrm{H}^n_{\mathfrak{F}}(G,M)= \mathrm{Ext}^{n}_{\orb}(\underline{\mathbb{Z}},M), \]
where $\underline{\mathbb{Z}}$ is the constant functor. There is also a notion of \emph{cohomological dimension of $G$ for the family $\mathfrak{F}$}, denoted by $\mathrm{cd}_{\mathfrak{F}}(G)$ and defined by
\[ \mathrm{cd}_{\mathfrak{F}}(G) = \sup\{ n \in \mathbb{N} \ | \ \exists M \in \orbmod :  \mathrm{H}^n_{\mathfrak{F}}(G,M)\neq 0 \}. \]
Since the augmented cellular chain complex of any model for $E_{\mF}G$ yields a projective resolution of $\underline{\mathbb{Z}}$ that can be used to compute $\mathrm{H}_{\mF}^{\ast}(G,-)$, it follows that $ \mathrm{cd}_{\mathfrak{F}}(G) \leq  \mathrm{gd}_{\mathfrak{F}}(G)$. In fact, L\"{u}ck and Meintrup show that an even stronger result holds.
\begin{theorem}[{L\"{u}ck-Meintrup, \cite[0.1]{LuckMeintrup}}] \label{th: luck meintrup} Let $G$ be a group and let $\mathfrak{F}$ be a family of subgroups, then
\[ \mathrm{cd}_{\mathfrak{F}}(G) \leq  \mathrm{gd}_{\mathfrak{F}}(G) \leq \max\{3, \mathrm{cd}_{\mathfrak{F}}(G) \}. \]
\end{theorem}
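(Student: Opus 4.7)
The first inequality $\mathrm{cd}_{\mathfrak{F}}(G) \leq \mathrm{gd}_{\mathfrak{F}}(G)$ has essentially been argued in the paragraph immediately preceding the statement, so my plan is to make it precise and then attack the harder inequality. Given a finite dimensional model $X$ for $E_{\mathfrak{F}}G$ of dimension $d$, the functor sending $G/H$ to the cellular chain complex $C_\ast(X^H;\mathbb{Z})$ is a chain complex of $\orb$-modules of length $d$. Because each $C_k(X^H)$ is a free abelian group on the equivariant $k$-cells, it is a direct sum of representable functors $\mathbb{Z}[\mathrm{map}_G(-,G/K)]$ with $K\in\mathfrak{F}$, hence free. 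Contractibility of all fixed point sets $X^H$ for $H\in\mathfrak{F}$ implies this is a projective (in fact free) $\orb$-resolution of $\underline{\mathbb{Z}}$, so $\mathrm{Ext}^n_{\orb}(\underline{\mathbb{Z}},-)=0$ for $n>d$, giving $\mathrm{cd}_{\mathfrak{F}}(G)\leq d$.

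For the upper bound $\mathrm{gd}_{\mathfrak{F}}(G)\leq\max\{3,\mathrm{cd}_{\mathfrak{F}}(G)\}$, assume $n:=\mathrm{cd}_{\mathfrak{F}}(G)$ is finite (otherwise there is nothing to prove) and $n\geq 3$. The plan is to realise a length-$n$ projective $\orb$-resolution geometrically as the cellular chain complex of a $G$-CW-model, following an equivariant Eilenberg--Ganea argument. Starting from any $G$-CW-model $Y$ for $E_{\mathfrak{F}}G$ (whose existence is classical), I would construct a new model $X$ by induction on skeleta: take $X^{(0)}$ to be a disjoint union of orbits $G/H$ for $H$ ranging over representatives of conjugacy classes meeting $\mathfrak{F}$, and inductively attach equivariant cells $G/H\times D^{k+1}$ to kill the Bredon-equivariant homotopy groups $\underline{\pi}_k$, which by the equivariant Hurewicz theorem of tom Dieck/L\"uck coincide with the Bredon homology of the pair $(Y,X^{(k)})$ in the appropriate range. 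The obstructions to truncating at dimension $n$ lie in $\mathrm{H}^{n+1}_{\mathfrak{F}}(G,\underline{\pi}_n(X^{(n)}))$, which vanishes by hypothesis, so one can complete the construction in dimension $n$; the equivariant Whitehead theorem then identifies the resulting $n$-dimensional complex with $E_{\mathfrak{F}}G$.

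The only place this argument breaks down is in low dimensions, which is precisely why the statement uses $\max\{3,\mathrm{cd}_{\mathfrak{F}}(G)\}$ rather than just $\mathrm{cd}_{\mathfrak{F}}(G)$. When $n=2$, realising a length-$2$ projective resolution by a $2$-dimensional $G$-CW-complex is an equivariant analogue of the Eilenberg--Ganea problem: the algebraic obstruction vanishes, but one cannot always read off a $2$-dimensional geometric model from a $2$-dimensional algebraic one. The standard workaround, which I would adopt, is to allow one additional cell of dimension $3$, yielding a $3$-dimensional model; the same trick handles $n=0,1$. This is the main technical obstacle in the proof and is the reason the bound is not sharp below dimension $3$.

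The step I expect to be hardest is verifying that the cellular attaching maps can actually be chosen to realise a prescribed algebraic boundary in Bredon chains. Concretely, one needs to know that for any free $\orb$-module map $\bigoplus_i \mathbb{Z}[\mathrm{map}_G(-,G/H_i)]\to C_k(X^{(k)})$ there is an equivariant map $\coprod_i G/H_i\times S^k\to X^{(k)}$ inducing it on $k$-th homotopy, which in turn relies on the identification of equivariant homotopy classes of maps out of $G/H\times S^k$ with the $H$-fixed points of $\pi_k$. Once this translation between Bredon algebra and equivariant topology is in place, the inductive construction proceeds as in the nonequivariant Eilenberg--Ganea theorem, and the bound $\max\{3,\mathrm{cd}_{\mathfrak{F}}(G)\}$ follows.
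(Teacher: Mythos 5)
You should know at the outset that the paper does not prove this statement: it is quoted from L\"uck--Meintrup \cite{LuckMeintrup}, and the only argument the paper supplies is the remark immediately preceding the theorem, which gives the easy inequality $\mathrm{cd}_{\mathfrak{F}}(G)\leq\mathrm{gd}_{\mathfrak{F}}(G)$ exactly as you do: the augmented cellular Bredon chain complex of a $d$-dimensional model is a free resolution of $\underline{\mathbb{Z}}$ of length $d$. That half of your write-up is correct and complete. For the harder inequality there is nothing in the paper to compare against; what you have sketched is, in outline, the equivariant Eilenberg--Ganea argument of the cited source itself.

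As a proof, however, your sketch glosses the one step where the work lives. Saying that the obstruction to truncating at dimension $n$ lies in $\mathrm{H}^{n+1}_{\mathfrak{F}}(G,\underline{\pi}_n(X^{(n)}))$ and vanishes by hypothesis does not describe a working mechanism: once you have built an $n$-dimensional $X^{(n)}$, the module $\underline{\pi}_n(X^{(n)})$ is the $n$-th syzygy of a free resolution and is in general nonzero, and you cannot kill it without attaching $(n+1)$-cells, which is exactly what you must avoid. The correct route is to stop at $X^{(n-1)}$, identify $\underline{\pi}_n(Y,X^{(n-1)})$ (via the equivariant Hurewicz theorem, which is where $n\geq 3$ enters) with the $n$-th kernel of the resolution, deduce from $\mathrm{cd}_{\mathfrak{F}}(G)\leq n$ by dimension shifting that this kernel is \emph{projective}, make it \emph{free} by an Eilenberg swindle realized geometrically by wedging trivial equivariant $n$-spheres onto $X^{(n-1)}$, and only then attach $n$-cells along maps realizing a free basis; the equivariant Whitehead theorem finishes the argument. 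Your final paragraph correctly supplies the adjunction $[G/H\times S^k,X]_G\cong\pi_k(X^H)$ needed to realize algebraic boundaries by attaching maps, so the genuine gap is precisely the projective-to-free stabilization and the fact that one must work from the $(n-1)$-skeleton rather than the $n$-skeleton. Also, for $\mathrm{cd}_{\mathfrak{F}}(G)\leq 2$ one simply runs the same argument with $n=3$; no separate ``one additional $3$-cell'' device is needed.
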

Hence, to show that there exist a finite dimensional model for $E_{\mathfrak{F}}G$, it suffices to show that the Bredon cohomological dimension of $G$ for the family $\mathfrak{F}$ is finite. \\
\indent We finish this section with some notational conventions.
\begin{notation} \rm
\begin{itemize} \item[]
\item[-] If $a_1,a_2,\ldots,a_n$ are elements of some group $G$, then $\prod_{i=1}^na_i$ denotes the product $a_1a_2\ldots a_n$ in that specific order.
\item[-] By $\mathbb{N}_1$, we denote the set of natural numbers without the number zero.
\item[-] If $t$ is an element of some group $G$, then $\langle t \rangle$ is the cyclic subgroup of $G$ generated by $t$. All infinite cyclic groups will be written multiplicatively with unit $1$. The unit element of a group that is not infinite cyclic will be denoted by $e$.
\item[-] If $G$ is a group and $C=\langle t \rangle$ is an infinite cyclic group generated by $t$, we say that a semi-direct product $G\rtimes C$ is determined by $\varphi \in \mathrm{Aut}(G)$ when
\[ (e,t)(g,1)(e,t^{-1})=(\varphi(g),1) \in G \rtimes C\]
for all $g \in G$ and denote it by $G\rtimes_{\varphi} C$.
\item[-] Let $G$ be a group and let $\mathfrak{F}$ be a family of subgroups. If $\mathfrak{F}$ is the family of finite (virtually cyclic) subgroups then $E_{\mathfrak{F}}G$, $\mathrm{gd}_{\mathfrak{F}}G$ and $\mathrm{cd}_{\mathfrak{F}}G$ be will denoted by $\underline{E}G$ ($\underline{\underline{E}}G$), $\underline{\mathrm{gd}}G$ ($\underline{\underline{\mathrm{gd}}}G$) and $\underline{\mathrm{cd}}G$ ($\underline{\underline{\mathrm{cd}}}G$), respectively.
\end{itemize}
\end{notation}

\section{The structure of commensurators} \label{sec: comm}
In this section, we shall describe the structure of commensurators of infinite virtually cyclic groups in torsion-by-$\mathbb{Z}$ groups. We start with the following elementary but for our purposes indispensable  property of infinite virtually cyclic groups.
\begin{lemma} \label{lemma: intersection of vc}Let $H$ be an infinite virtually cyclic group and let $L_1$ and $L_2$ be two infinite virtually cyclic subgroups of $H$. Then $L_1 \cap L_2$ is infinite.
\end{lemma}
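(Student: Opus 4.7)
The plan is to reduce everything to subgroups of a single infinite cyclic group sitting inside $H$, where intersections are easy to compute. Since $H$ is infinite virtually cyclic, it contains an infinite cyclic subgroup $C=\langle t\rangle$ of finite index. This $C$ will be the ambient object in which we compare $L_1$ and $L_2$.

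First I would observe that $C\cap L_i$ has finite index in $L_i$ for $i=1,2$, because $C$ has finite index in $H$. Since $L_1$ and $L_2$ are infinite by hypothesis, $C\cap L_1$ and $C\cap L_2$ are infinite as well. Being infinite subgroups of the infinite cyclic group $C$, they must themselves be infinite cyclic, say $C\cap L_1=\langle t^{n_1}\rangle$ and $C\cap L_2=\langle t^{n_2}\rangle$ with $n_1,n_2\in\mathbb{N}_1$.

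Now I would compute the intersection inside $C$: we have
\[ (C\cap L_1)\cap (C\cap L_2)=\langle t^{n_1}\rangle\cap\langle t^{n_2}\rangle=\langle t^{\mathrm{lcm}(n_1,n_2)}\rangle, \]
which is an infinite cyclic group. Since this group is contained in $L_1\cap L_2$, we conclude that $L_1\cap L_2$ is infinite, as required.

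The argument is essentially a one-liner once the finite-index infinite cyclic subgroup $C\leq H$ is introduced; there is no genuine obstacle, only the basic observation that finite index is preserved under intersection and that subgroups of $\mathbb{Z}$ are either trivial or again infinite cyclic. No deeper machinery from the preliminaries is needed for this lemma.
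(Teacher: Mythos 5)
Your proof is correct and follows essentially the same route as the paper's: both pass to a finite-index infinite cyclic subgroup $C\leq H$, produce infinite cyclic subgroups of $L_1$ and $L_2$ inside $C$, and conclude by intersecting inside $C\cong\mathbb{Z}$. The only cosmetic difference is that you invoke the standard fact that $[L_i:L_i\cap C]\leq[H:C]$, whereas the paper establishes the same point by a hands-on pigeonhole argument on powers of an infinite-order element of $L_i$.
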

\begin{proof} By definition $H$ has an infinite cyclic subgroup $C$ of finite index. Let $t$ be a generator for $C$ and let $\{x_1,\ldots,x_n\}$ be a set of coset representatives of $H/C$. Because $L_1$ is infinite virtually cyclic, it has an element $x$ of infinite order. It is clear that for each $p \in \mathbb{N}_1$, there exists an integer $j_p$ and an element $x_{i_p} \in \{x_1,\ldots,x_n\}$ such that $x^p=x_{i_p}t^{j_p}$. Because there are only finitely many possibilities for $i_p$, there exist $m, n \in \mathbb{N}_1$ such that $n>m$ and $i_n=i_m$. It follows that $x^nt^{-j_n}=x^mt^{-j_m}$, and hence $x^{n-m}=t^{j_n-j_m}$. This implies that $L_1$ has an infinite cyclic subgroup $C_1=\langle x^{n-m}\rangle$ contained in $C$. Similarly  $L_2$ has an infinite cyclic subgroup $C_2$ contained in $C$. But it is clear that the intersection of two infinite subgroups in an infinite cyclic group is always infinite, hence $C_1 \cap C_2$ is infinite and contained in $L_1 \cap L_2$. This completes the proof.
\end{proof}
The commensurator of a subgroup $H$ of a group $\Gamma$ is by definition the group
\[ \mathrm{Comm}_{\Gamma}[H]=\{x \in \Gamma \ | \ [H: H \cap H^{x}]<\infty \ \mbox{and} \ [H^x: H \cap H^{x}]< \infty \}. \]
However, when $H$ is an infinite virtually cyclic subgroup of $\Gamma$ then one can verify that
\[ \mathrm{Comm}_{\Gamma}[H]=\{x \in \Gamma \ | \ |H \cap H^x | = \infty \}. \]

Let $\Gamma$ be a torsion-by-$\mathbb{Z}$ group. Then $\Gamma$ has a torsion normal subgroup $G$ such that $\Gamma/G$ is infinite cyclic. Hence, we have a split short exact sequence
\[  1 \rightarrow G \rightarrow \Gamma  \xrightarrow{\pi} \langle t \rangle \rightarrow 1 \]
such that multiplication in $\Gamma$ is given by $(a,t^n)(b,t^m)=(a\varphi^n(b),t^{n+m})$, for some fixed $\varphi \in \mathrm{Aut}(G)$.
This splitting of $\Gamma$ into a semi-direct product will be referred to as the $t$-splitting of $\Gamma$.

Let us denote the set of infinite virtually cyclic subgroups of $\Gamma$ by $\mathcal{S}$. As in definition $2.2.$ of \cite{LuckWeiermann}, we say that two infinite virtually cyclic subgroups $H$ and $K$ of $\Gamma$ are equivalent, denoted $H \sim K$, if $|H\cap K|=\infty$. Using Lemma \ref{lemma: intersection of vc}, it is not difficult to check that this indeed defines an equivalence relation on $\mathcal{S}$. One can also easily verify that this equivalence relation is a strong equivalence relation.

Now, suppose $H$ is an infinite virtually cyclic subgroup of $\Gamma$. We are interested in the structure of $\mathrm{Comm}_{\Gamma}[H]$. The following simple lemma shows that  $H$ can be assumed to be infinite cyclic.
\begin{lemma}  \label{lemma: easycommlemma} If $H$ and $K$ are two equivalent infinite virtually cyclic subgroups of $\Gamma$, then their commensurators coincide. In particular, the commensurator of an infinite virtually cyclic subgroup $H$ of $\Gamma$ is the commensurator of any infinite cyclic subgroup of $H$.
\end{lemma}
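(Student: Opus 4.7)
The plan is to exploit the alternative characterization of the commensurator of an infinite virtually cyclic subgroup stated just before the lemma, namely
\[ \mathrm{Comm}_{\Gamma}[H]=\{x \in \Gamma \ | \ |H \cap H^x | = \infty \}, \]
together with the equivalence relation $\sim$ on $\mathcal{S}$ given by $H \sim K \Leftrightarrow |H\cap K|=\infty$. With this reformulation, membership of $x$ in $\mathrm{Comm}_{\Gamma}[H]$ is simply the statement $H \sim H^x$, so the conclusion becomes a purely formal consequence of properties of $\sim$.

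First, I would spell out three properties of $\sim$ on $\mathcal{S}$. Reflexivity and symmetry are immediate. Transitivity is exactly where Lemma \ref{lemma: intersection of vc} is used: if $H \sim K$ and $K \sim L$, then $H\cap K$ and $K\cap L$ are two infinite virtually cyclic subgroups of the infinite virtually cyclic group $K$, so by that lemma their intersection, which sits inside $H \cap L$, is infinite; hence $H \sim L$. Invariance under conjugation, $H \sim K \Leftrightarrow H^x \sim K^x$, is clear from $(H\cap K)^x = H^x \cap K^x$.

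Given $H \sim K$, I would deduce $\mathrm{Comm}_{\Gamma}[H]=\mathrm{Comm}_{\Gamma}[K]$ as follows. Take $x \in \mathrm{Comm}_{\Gamma}[H]$, so that $H \sim H^x$. Conjugating $H \sim K$ by $x$ yields $H^x \sim K^x$. Chaining
\[ K \sim H \sim H^x \sim K^x \]
via transitivity gives $K \sim K^x$, i.e.\ $x \in \mathrm{Comm}_{\Gamma}[K]$. The reverse inclusion is obtained by symmetry, proving the first assertion.

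For the ``in particular'' statement, let $C$ be any infinite cyclic subgroup of an infinite virtually cyclic group $H$. Then $C\cap H = C$ is infinite, so $C \sim H$ and the first part applies to give $\mathrm{Comm}_{\Gamma}[H]=\mathrm{Comm}_{\Gamma}[C]$. I do not anticipate any real obstacle here; the lemma is essentially a repackaging of the transitivity of $\sim$, which is in turn the content of Lemma \ref{lemma: intersection of vc}. The only care needed is to verify that $\sim$ is indeed an equivalence relation on $\mathcal{S}$, which the authors have already remarked on just above the lemma.
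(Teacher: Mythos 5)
Your proposal is correct and follows essentially the same route as the paper's proof: both recast membership in $\mathrm{Comm}_{\Gamma}[H]$ as the relation $H \sim H^x$ and then chain $K \sim H \sim H^x \sim K^x$ using conjugation-invariance and the transitivity supplied by Lemma \ref{lemma: intersection of vc}. Your write-up is just a bit more explicit about verifying that $\sim$ is an equivalence relation, which the paper delegates to a remark preceding the lemma.
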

\begin{proof}
Suppose $H$ and $K$ are two infinite virtually cyclic subgroups of $\Gamma$ such that $H \sim K$. If $x \in \mathrm{Comm}_{\Gamma}[H]$, then by definition $H \sim H^x$. Since $H \sim K$, we have $H^x \sim K^x$ and therefore $K \sim K^x$. This implies that $\mathrm{Comm}_{\Gamma}[H]= \mathrm{Comm}_{\Gamma}[K]$. The second statement follows from this by noting that any element of $\mathcal{S}$ is equivalent to any of its infinite cyclic subgroups.
\end{proof}
The next lemma describes the structure of commensurators of infinite cyclic groups inside torsion-by-$\Z$ groups.

\begin{lemma} \label{lemma: keycommlemma} If $H$ is an infinite cyclic subgroup of $\Gamma$, then $\mathrm{Comm}_{\Gamma}[H]$ contains an infinite cyclic subgroup $K=\langle(a,t^k)\rangle$ of $\Gamma$ such that $H \sim K$ and such that there is a commutative diagram with split
exact rows
\[ \xymatrix{
1 \ar[r] & G  \ar[r] & \Gamma  \ar[r]^{\pi} & \langle t \rangle  \ar[r] & 1 \\
1 \ar[r] & T \ar[u]  \ar[r] &  \mathrm{Comm}_{\Gamma}[H] \ar[u]  \ar[r]^{\pi} & \langle t^k \rangle \ar[u]    \ar[r] & 1
} \]
where the vertical maps are inclusions,
\[  T= \{ g \in G \ | \ \exists n \in \mathbb{N}_1: \varphi^{nk}(g)=\alpha_{n,k}^{-1} g \alpha_{n,k} \}, \]
and $\alpha_{n,k}=\prod_{i=0}^{n-1}\varphi^{ik}(a)$.
\end{lemma}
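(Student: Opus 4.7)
The plan is to construct $K$ and the splitting explicitly, then verify the claimed properties. First, since $H$ is infinite cyclic and $G$ is torsion, the restriction $\pi|_H$ is injective, so $\pi(H) = \langle t^m \rangle$ for some $m \in \mathbb{N}_1$ and $H = \langle (b, t^m) \rangle$ for some $b \in G$. The projection $\pi(\mathrm{Comm}_\Gamma[H]) \subseteq \langle t \rangle$ contains $\pi(H)$, so it equals $\langle t^k \rangle$ for a unique $k \in \mathbb{N}_1$, with $k \mid m$; set $r = m/k$. Then I would choose any $a \in G$ with $(a, t^k) \in \mathrm{Comm}_\Gamma[H]$ and define $K = \langle (a, t^k) \rangle$, which is infinite cyclic since $\pi$ sends it isomorphically onto $\langle t^k \rangle$.

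The key technical step is to show $H \sim K$. Set $h = (b, t^m)$. Membership $(a, t^k) \in \mathrm{Comm}_\Gamma[H]$ supplies nonzero integers $p, q$ with $(a, t^k)^{-1} h^p (a, t^k) = h^q$; applying $\pi$ forces $q = p$, so $(a, t^k)$ commutes with $h^p$ for some $p \in \mathbb{N}_1$. Both $(a, t^k)^{rp}$ and $h^p$ project to $t^{mp}$, so their ratio
\[ u := (a, t^k)^{rp} h^{-p} \]
lies in $G$ and commutes with $h^p$ (as $(a,t^k)^{rp}$ does). Since $G$ is torsion, $u$ has finite order $N$, and commutativity yields $(a, t^k)^{rpN} = (u h^p)^N = u^N h^{pN} = h^{pN}$, a nontrivial element of $K \cap H$. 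Hence $K \cap H$ is infinite, and $H \sim K$.

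Having established $H \sim K$, Lemma \ref{lemma: easycommlemma} gives $\mathrm{Comm}_\Gamma[H] = \mathrm{Comm}_\Gamma[K]$, and the same projection argument as above shows $\mathrm{Comm}_\Gamma[K] = \bigcup_{n \in \mathbb{N}_1} C_\Gamma((a, t^k)^n)$. Restricting the $t$-splitting of $\Gamma$ to $\mathrm{Comm}_\Gamma[H]$ yields the bottom row of the diagram: its image under $\pi$ is $\langle t^k \rangle$ by construction of $k$, its kernel is $T := \mathrm{Comm}_\Gamma[H] \cap G$, and the inclusion $K \hookrightarrow \mathrm{Comm}_\Gamma[H]$ furnishes a splitting via $t^k \mapsto (a, t^k)$. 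To match the stated formula for $T$, I would compute that $g \in G$ centralizes $(a, t^k)^n = (\alpha_{n, k}, t^{nk})$ iff $g \alpha_{n, k} = \alpha_{n, k} \varphi^{nk}(g)$, i.e.\ iff $\varphi^{nk}(g) = \alpha_{n, k}^{-1} g \alpha_{n, k}$; taking the union over $n \in \mathbb{N}_1$ delivers the description of $T$. Commutativity of the diagram is immediate from the vertical maps being inclusions.

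The main obstacle is verifying $H \sim K$: a priori, an element of $\mathrm{Comm}_\Gamma[H]$ generates a cyclic subgroup that only commensurates $H$ and need not literally share a power with it. The torsion hypothesis on $G$ is what rescues the argument, allowing the discrepancy $u \in G$ between $(a, t^k)^{rp}$ and $h^p$ to be annihilated by passing to a sufficiently large power, thereby producing an honest coincidence between powers of the generators of $K$ and $H$.
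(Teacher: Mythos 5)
Your proposal is correct and follows essentially the same route as the paper: identify $k$ from $\pi(\mathrm{Comm}_{\Gamma}[H])$, pick a lift $(a,t^k)$, use the projection to $\langle t\rangle$ to upgrade commensuration to centralization of a power, and then use the torsion of $G$ to force an actual common power of the generators, after which $T$ is computed as a union of centralizer conditions. The only cosmetic difference is that the paper first passes to the $s$-splitting of $\mathrm{Comm}_{\Gamma}[H]$ and kills the finite-order element $\beta_{n,m}$ there, whereas you stay in the $t$-splitting and kill the discrepancy $u=(a,t^k)^{rp}h^{-p}$ directly; the underlying computation is the same.
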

\begin{proof} Let us view $\mathrm{Comm}_{\Gamma}[H]$ as a subgroup of $\Gamma$ where $\Gamma$ is given by the $t$-splitting. Clearly, $\pi(\mathrm{Comm}_{\Gamma}[H])=\langle t^k \rangle$ for some $k \in \mathbb{N}_1$ and the kernel of $\pi_{|\mathrm{Comm}_{\Gamma}[H]}$ is a torsion group, which we denote by $T$. Now consider an element $(a,t^k)\in \mathrm{Comm}_{\Gamma}[H] \subseteq \Gamma$ and denote it by $s$. Then $\mathrm{Comm}_{\Gamma}[H]$ can be written as a semi-direct product $T \rtimes \langle s \rangle$ determined by some $\theta \in \mathrm{Aut}(T)$. The splitting of $\mathrm{Comm}_{\Gamma}[H]$ into this particular semi-direct product will be referred to as the $s$-splitting of $\mathrm{Comm}_{\Gamma}[H]$.

By considering the $s$-splitting of $\mathrm{Comm}_{\Gamma}[H]$ and recalling that $H$ is a subgroup of $\mathrm{Comm}_{\Gamma}[H]$, it follows that $H$ is generated by some $(h,s^m) \in \mathrm{Comm}_{\Gamma}[H]$. Since $(e,s) \in \mathrm{Comm}_{\Gamma}[H]$, we must have that $\langle {}^{(e,s)}(h,s^m)\rangle\sim \langle(h,s^m) \rangle$. This implies that there exists  $n \in \mathbb{N}_1$ such that \[ (e,s)(h,s^m)^n(e,s^{-1})=(h,s^m)^n.\]
Denoting $\beta_{n,m}=\prod_{i=0}^{n-1}\theta^{im}(h)$,  the equation above implies that $\theta(\beta_{n,m})=\beta_{n,m}$. From this it follows that $(\beta_{n,m},s^{mn})^k= (\beta_{n,m}^k,s^{mnk})$ for each $k \in\mathbb{N}_1$. Since $\beta_{n,m}$ has finite order, there exists  $r \in \mathbb{N}_1$ such that $(\beta_{n,m},s^{mn})^r=(e,s^{mnr})$. We conclude that
\[ \langle(h,s^m)\rangle \sim \langle(h,s^m)^{nr}\rangle \sim \langle(e,s)^{mnr}\rangle \sim \langle(e,s)\rangle, \]
hence  $H=\langle(h,s^m)\rangle \sim \langle(e,s)\rangle:=K$. As a subgroup of $\Gamma$, $K$ is generated by $(a,t^k)$. Thus, we have found a infinite cyclic subgroup $K=\langle(a,t^k)\rangle$ of $\Gamma$ such that $H \sim K $ and such that \[ \xymatrix{
1 \ar[r] & G  \ar[r] & \Gamma  \ar[r]^{\pi} & \langle t \rangle  \ar[r] & 1 \\
1 \ar[r] & T \ar[u] \ar[r] &  \mathrm{Comm}_{\Gamma}[H] \ar[u] \ar[r]^{\pi} & \langle t^k \rangle \ar[u]  \ar[r] & 1
} \] commutes and has split exact rows.  Also, by Lemma \ref{lemma: easycommlemma}, we have $\mathrm{Comm}_{\Gamma}[H]=\mathrm{Comm}_{\Gamma}[K]$. It remains to determine $T$.

 It is clear from the diagram that $T=G\cap\mathrm{Comm}_{\Gamma}[H]$. Since  $\mathrm{Comm}_{\Gamma}[H]=\mathrm{Comm}_{\Gamma}[K]$, it follows that $T=G\cap\mathrm{Comm}_{\Gamma}[\langle(a,t^k)\rangle]$. By considering the $t$-splitting of $\Gamma$ and recalling that $K=\langle(a,t^k)\rangle$ under this splitting, we find that given $g \in G$, then $g \in T$ if and only if $(g,1)(a,t^k)^n(g^{-1},1)=(a,t^k)^n$ for some $n \in \mathbb{N}_1$. Since $(a,t^k)^n=(\alpha_{n,k},t^{kn})$ for $\alpha_{n,k}=\prod_{i=0}^{n-1}\varphi^{ik}(a)$, it follows that given $g \in G$, then $g \in T$ if and only if $\varphi^{nk}(g)=\alpha_{n,k}^{-1} g \alpha_{n,k}$. So,
\[  T= \{ g \in G \ | \ \exists n \in \mathbb{N}_1: \varphi^{nk}(g)=\alpha_{n,k}^{-1} g \alpha_{n,k} \}. \]
\end{proof}
\begin{proposition} \label{prop: comm} Let $\Gamma=G\rtimes \mathbb{Z}$, where $G$ a torsion group and let $H$ be an infinite virtually cyclic subgroup of $\Gamma$. Then the following hold.
\begin{itemize}
\item[(a)] There exists a subgroup $T$ of $G$ such that $\mathrm{Comm}_{\Gamma}[H]\cong T\rtimes_{\theta} \mathbb{Z}$ and  such that for each $g \in T$ there is  $n \in \mathbb{N}_1$ for which $\theta^n(g)=g$.
\item[(b)] Every infinite virtually cyclic subgroup of $\mathrm{Comm}_{\Gamma}[H]$ is equivalent to $H$.
\end{itemize}
\end{proposition}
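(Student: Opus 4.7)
The proof plan is essentially to unwind Lemma~\ref{lemma: keycommlemma} and augment its argument slightly; most of the real work has already been done there.

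For part (a), I will use the $s$-splitting $\mathrm{Comm}_{\Gamma}[H]=T\rtimes \langle s\rangle$ with $s=(a,t^k)$ from Lemma~\ref{lemma: keycommlemma}. Since $\langle s\rangle\cong\mathbb{Z}$, this is already the required $T\rtimes_\theta\mathbb{Z}$ up to renaming the generator. The only thing to check is that for each $g\in T$ some iterate $\theta^n$ fixes $g$. For this, I would compute $\theta$ explicitly inside $\Gamma$ using the $t$-splitting: a direct calculation of $(a,t^k)(g,1)(a,t^k)^{-1}$ gives $\theta(g)=a\varphi^k(g)a^{-1}$. Then, using the factorisation $\alpha_{n+1,k}=a\cdot\varphi^k(\alpha_{n,k})$ of the products $\alpha_{n,k}=\prod_{i=0}^{n-1}\varphi^{ik}(a)$, a straightforward induction on $n$ yields
\[
\theta^n(g)=\alpha_{n,k}\,\varphi^{nk}(g)\,\alpha_{n,k}^{-1}.
\]
Combining this with the defining condition $\varphi^{nk}(g)=\alpha_{n,k}^{-1}g\alpha_{n,k}$ for $g\in T$ provided by Lemma~\ref{lemma: keycommlemma} immediately gives $\theta^n(g)=g$, as required.

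For part (b), let $L$ be any infinite virtually cyclic subgroup of $\mathrm{Comm}_{\Gamma}[H]$. By Lemma~\ref{lemma: easycommlemma} it suffices to show $L\sim H$, and since $H\sim K=\langle s\rangle$ by Lemma~\ref{lemma: keycommlemma}, it in fact suffices to show $L\sim K$. Using the $s$-splitting, pick any infinite-order element of $L$; since $T$ is torsion, it must be of the form $(h,s^m)$ with $m\neq 0$. The argument already carried out in the proof of Lemma~\ref{lemma: keycommlemma}, applied to this element in place of the generator of $H$, shows $\langle(h,s^m)\rangle\sim\langle s\rangle=K$. Since $\langle(h,s^m)\rangle\subseteq L$, their intersection with $K$ is infinite, and so by Lemma~\ref{lemma: intersection of vc} we conclude $L\sim K\sim H$.

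The main step that required any genuine computation is the inductive formula for $\theta^n$ in part~(a); once that identity is in hand, both parts fall out of Lemma~\ref{lemma: keycommlemma} essentially verbatim. The mild obstacle to watch is bookkeeping with the order of multiplication in the definition of $\alpha_{n,k}$, since $G$ need not be abelian, but the induction step survives because the factorisation $\alpha_{n+1,k}=a\cdot\varphi^k(\alpha_{n,k})$ respects this order.
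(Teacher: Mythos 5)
Your part (a) is correct and is essentially the paper's own argument: the paper likewise passes to the $s$-splitting $T\rtimes_{\theta}\langle s\rangle$ with $\theta(g)=a\varphi^{k}(g)a^{-1}$ and leaves to the reader the verification that every $g\in T$ is fixed by some power of $\theta$; your inductive identity $\theta^{n}(g)=\alpha_{n,k}\,\varphi^{nk}(g)\,\alpha_{n,k}^{-1}$, combined with the description of $T$ from Lemma~\ref{lemma: keycommlemma}, is exactly the computation that makes this explicit, and the noncommutative bookkeeping via $\alpha_{n+1,k}=a\cdot\varphi^{k}(\alpha_{n,k})$ is right.

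Part (b), however, has a genuine gap as stated. The argument from the proof of Lemma~\ref{lemma: keycommlemma} that you propose to rerun begins with the observation that, since $(e,s)\in\mathrm{Comm}_{\Gamma}[H]$ and $H=\langle(h,s^{m})\rangle$, conjugation by $(e,s)$ carries $\langle(h,s^{m})\rangle$ to an equivalent subgroup; that is precisely what produces the integer $n$ with $\theta(\beta_{n,m})=\beta_{n,m}$. For an arbitrary infinite-order element $(h,s^{m})$ of $L\subseteq\mathrm{Comm}_{\Gamma}[H]$ you do not know that $(e,s)$ commensurates $\langle(h,s^{m})\rangle$ --- that is essentially the statement $\langle(h,s^{m})\rangle\sim H$ you are trying to prove --- so the first step of the recycled argument is circular and it cannot be applied ``verbatim'' to this element. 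The repair is short and is what the paper does: since $h\in T$, part (a) (which you have already established) provides $n\in\mathbb{N}_{1}$ with $\theta^{n}(h)=h$; setting $b=\prod_{i=0}^{n-1}\theta^{im}(h)$ and letting $p$ be the order of $b$, one computes $(h,s^{m})^{np}=(b^{p},s^{mnp})=(e,s^{mnp})$, whence $\langle(h,s^{m})\rangle\sim\langle s\rangle=K\sim H$ and then $L\sim H$ as you conclude. With that substitution for the circular step, your proof is complete and coincides with the paper's.
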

\begin{proof}
Let $C$ be an infinite cyclic subgroup of $H$. It follows from Lemma \ref{lemma: easycommlemma} that $\mathrm{Comm}_{\Gamma}[H]=\mathrm{Comm}_{\Gamma}[C]$. Moreover, any infinite virtually cyclic subgroup of $\mathrm{Comm}_{\Gamma}[H]$ that is equivalent to $H$ is also equivalent to $C$, and vice versa.

Let us view $\Gamma$ as a semi-direct product $G\rtimes_{\varphi}\langle t \rangle$, i.e. fix a $t$-splitting for $\Gamma$. It follows from Lemma \ref{lemma: keycommlemma} that $\mathrm{Comm}_{\Gamma}[H]$ contains an infinite cyclic subgroup $K=\langle(a,t^k)\rangle$ of $\Gamma$ such that $K \sim C \sim H$ and such that there is a commutative diagram with split
exact rows
\[ \xymatrix{
1 \ar[r] & G  \ar[r] & \Gamma  \ar[r]^{\pi} & \langle t \rangle  \ar[r] & 1 \\
1 \ar[r] & T  \ar[u]  \ar[r] &  \mathrm{Comm}_{\Gamma}[H]  \ar[u]  \ar[r]^{\pi} & \langle t^k \rangle  \ar[u]  \ar[r] & 1
} \]
where the vertical maps are inclusions and
\[  T= \{ g \in G \ | \ \exists n \in \mathbb{N}_1: \varphi^{nk}(g)=\alpha_{n,k}^{-1} g \alpha_{n,k} \}, \]
and $\alpha_{n,k}=\prod_{i=0}^{n-1}\varphi^{ik}(a)$.
Now, let us consider the group $\mathrm{Comm}_{\Gamma}[H]$ in the diagram above, and view it as a semi-direct product via the $s$-splitting determined by the element $s:=(a,t^k)$. It follows that $\mathrm{Comm}_{\Gamma}[H] \cong T \rtimes_{\theta} \langle s \rangle$, where  $\theta: T \rightarrow T : g \mapsto a\varphi^k(g)a^{-1}$. One easily verifies that \[  T= \{ g \in G \ | \ \exists n \in \mathbb{N}_1: \theta^n(g)=g \}. \]
This proves part (a) of the proposition.

Suppose $V$ is an infinite virtually cyclic subgroup of $ \mathrm{Comm}_{\Gamma}[H]$ and let $S$ be an infinite cyclic subgroup of $V$. We want to show that $V \sim H$. By Lemma \ref{lemma: easycommlemma} and the fact that $H \sim K$, this is equivalent to showing that $S \sim K$. Under the $s$-splitting of $\mathrm{Comm}_{\Gamma}[H]$, $S$ is generated by some element $(g,s^m)$ and $K$ is generated by $(e,s)$. By part (a),  there exists $n \in \mathbb{N}_1$ such that $\theta^n(g)=g$. Denote $b= \prod_{i=0}^{n-1}\theta^{im}(g)$ and let $p \in \mathbb{N}_1$ such that $b^p=e$ in $G$. It now follows that $(g,s^m)^{np}=( \prod_{i=0}^{np-1}\theta^{im}(g),s^{mnp})=(b^p,s^{mnp})=(e,s^{mnp})$. This implies that $S \sim K$ and finishes the proof of part (b).
\end{proof}
\section{Proofs of Theorems A and B}
Throughout this section let  $n$ be a fixed nonnegative integer. We begin by determining an upper bound for $\underline{\mathrm{gd}}(\Gamma)$.
\begin{lemma} \label{lemma: bound finite} Let $\Gamma$ be a locally finite-by-virtually cyclic group of  cardinality $\aleph_n$. Then $\underline{\mathrm{gd}}(\Gamma)\leq n+2$.
\end{lemma}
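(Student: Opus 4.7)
The plan is to combine L\"uck's extension construction for classifying spaces with the known bound on the geometric dimension of a locally finite group of prescribed cardinality. By hypothesis, $\Gamma$ fits in a short exact sequence
\[ 1 \to L \to \Gamma \xrightarrow{p} Q \to 1 \]
with $L$ locally finite and $Q$ virtually cyclic; since $Q$ is countable, $|L|\leq|\Gamma|=\aleph_n$.

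The first ingredient I would invoke is the standard bound $\underline{\mathrm{gd}}(L')\leq n+1$ for every locally finite group $L'$ of cardinality at most $\aleph_n$. For $n=0$ this is classical: a countable locally finite group, being a directed union of finite subgroups, acts on a Bass--Serre tree with finite stabilizers, so $\underline{\mathrm{gd}}(L')\leq 1$. For $n\geq 1$ it follows by transfinite induction on cardinality, writing $L'$ as a continuous ascending union of locally finite subgroups of strictly smaller cardinality and patching the corresponding models via a mapping telescope/pushout construction at each successor stage.

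The second ingredient is L\"uck's extension theorem applied to $p$, which yields
\[ \underline{\mathrm{gd}}(\Gamma)\;\leq\;\underline{\mathrm{gd}}(Q)\;+\;\sup_{F\leq Q \text{ finite}}\underline{\mathrm{gd}}\bigl(p^{-1}(F)\bigr). \]
Here $\underline{\mathrm{gd}}(Q)\leq 1$ because $Q$ is virtually cyclic, and for every finite $F\leq Q$ the preimage $p^{-1}(F)$ is an extension of $L$ by a finite group, hence locally finite of cardinality at most $\aleph_n$, so the first ingredient forces $\underline{\mathrm{gd}}(p^{-1}(F))\leq n+1$. Substituting gives $\underline{\mathrm{gd}}(\Gamma)\leq 1+(n+1)=n+2$, as required.

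The only point requiring care is verifying that the family implicitly appearing on the left-hand side of L\"uck's formula coincides with the family of finite subgroups of $\Gamma$; this is immediate from the observation that $H\leq\Gamma$ is finite if and only if both $H\cap L$ and $p(H)$ are finite. The main obstacle, more organizational than substantive, is pinning down the precise form of the extension theorem and of the cardinality-versus-dimension bound for locally finite groups in the literature in the shape required above.
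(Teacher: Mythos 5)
Your argument is correct and is essentially the paper's own proof: the authors likewise pull back a one-dimensional model for $\underline{E}V$ along $\pi$ to obtain a model for the family of subgroups contained in some $\pi^{-1}(F)$, invoke the Dicks--Kropholler--Leary--Thomas bound $\underline{\mathrm{gd}}\leq n+1$ for locally finite groups of cardinality at most $\aleph_n$ (your first ingredient, which they cite rather than re-prove), and then combine the two via the L\"uck--Weiermann transitivity result, which is precisely your displayed extension inequality. The only cosmetic difference is that you package the pullback-plus-transitivity step as a single ``extension theorem'' and sketch a proof of the locally finite bound instead of citing it.
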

\begin{proof} We have a short exact sequence
\[ 1 \rightarrow G \rightarrow \Gamma \xrightarrow{\pi} V \rightarrow 1 \]
where $G$ is locally finite of cardinality $\aleph_n$ and $V$ is virtually cyclic.
Let us denote by $\mathfrak{F}$ the family of finite subgroups of $\Gamma$, and denote by $\mathfrak{V}$  the family of finite subgroups of $V$. Now, we define the family
\[ \mathfrak{H}=\{ K \subseteq \Gamma \ | \ K \subseteq \pi^{-1}(F) \ \mbox{for some} \ F \in \mathfrak{V}\} \]
of subgroups of $\Gamma$. Note that $\mathfrak{F} \subseteq \mathfrak{H}$. Let $X$ be a model for $E_{\mathfrak{V}}V$. By letting $\Gamma$ act on $X$ via $\pi$, it follows that $X$ is also a model for $E_{\mathfrak{H}}\Gamma$. This, together with Proposition $4$ of \cite{LearyPineda}, implies that $\mathrm{gd}_{\mathfrak{H}}(\Gamma) \leq \mathrm{gd}_{\mathfrak{V}}(V)\leq 1$.

Note that each group in $\mathfrak{H}$ is locally finite of cardinality at most $\aleph_n$. It therefore follows from Theorem $2.6$ of \cite{DicksKrophollerLearyThomas} that $\mathrm{gd}_{\mathfrak{F}\cap H}(H)=\underline{\mathrm{gd}}(H) \leq n+1$ for all $H \in \mathfrak{H}$. Proposition $5.1$(i) of \cite{LuckWeiermann} now shows that $\underline{\mathrm{gd}}(\Gamma)\leq n+2$.
\end{proof}
\begin{definition} \rm We say that a semi-direct product group $G \rtimes_{\varphi} \mathbb{Z}$  is  \emph{locally bounded} if for each $g \in G$, there exists some $n \in \mathbb{N}_1$ such that $\varphi^n(g)=g$.
\end{definition}

The following proposition determines $\underline{\underline{\mathrm{gd}}}(G \rtimes_{\varphi} \mathbb{Z})$, when $G \rtimes_{\varphi} \mathbb{Z}$ is locally bounded.
\begin{proposition} \label{prop: directed union} Let $G$ be an infinite locally finite group of cardinality $\aleph_n$ and assume that $G \rtimes_{\varphi} \mathbb{Z}$ is locally bounded. If a group $\Gamma$ contains $G \rtimes_{\varphi} \mathbb{Z}$ as a subgroup of finite index, then $\Gamma$ is a directed union of its virtually cyclic  subgroups and $\underline{\underline{\mathrm{gd}}}(\Gamma)= n+1$.
\end{proposition}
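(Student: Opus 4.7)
The approach splits into two parts: establishing the directed union structure of $\Gamma$, and then bounding $\underline{\underline{\mathrm{gd}}}(\Gamma)$ above and below by $n+1$.

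First I would show that every finite subset of $G$ sits inside a finite $\varphi$-invariant subgroup. Given $g_1,\ldots,g_k\in G$, local boundedness makes each $\varphi$-orbit $\{\varphi^j(g_i):j\in\mathbb{Z}\}$ finite, and local finiteness then forces the subgroup $F\leq G$ they collectively generate to be finite; by construction $F$ is $\varphi$-invariant. Hence $G\rtimes_{\varphi}\mathbb{Z}$ is the directed union of the virtually cyclic subgroups of the form $F\rtimes_{\varphi}\mathbb{Z}$. Since $N:=G\rtimes_{\varphi}\mathbb{Z}$ has finite index in $\Gamma$, any finitely generated $H\leq\Gamma$ has the finitely generated finite-index subgroup $H\cap N\subseteq N$, which is virtually cyclic by the preceding remark, and hence so is $H$. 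Therefore $\Gamma$ is locally virtually cyclic, which is the first assertion.

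For the lower bound $\underline{\underline{\mathrm{gd}}}(\Gamma)\geq n+1$, I would argue by restriction. Given any model $X$ for $\underline{\underline{E}}\Gamma$, regarded as a $G$-CW-complex, every finite $H\leq G$ is automatically virtually cyclic in $\Gamma$ so $X^H$ is contractible, while every infinite $H\leq G$ is an infinite torsion group, hence not virtually cyclic, so $X^H=\emptyset$. Thus $X$ is simultaneously a model for $\underline{E}G$, and $\dim X\geq \underline{\mathrm{gd}}(G)=n+1$ by Dicks--Kropholler--Leary--Thomas applied to the infinite locally finite group $G$ of cardinality $\aleph_n$.

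For the matching upper bound $\underline{\underline{\mathrm{gd}}}(\Gamma)\leq n+1$, I would proceed by induction on $n$, exploiting the directed union structure. In the base case $n=0$, $\Gamma$ is countable and writes as an ascending chain $V_0\subseteq V_1\subseteq\cdots$ of virtually cyclic subgroups; a mapping-telescope built from the discrete $\Gamma$-sets $\Gamma/V_i$ yields a one-dimensional model for $\underline{\underline{E}}\Gamma$, since every virtually cyclic $H\leq\Gamma$ lies in some $V_j$ and has contractible fixed-point set. For general $n$, filter $\Gamma=\bigcup_{\alpha<\omega_n}\Gamma_\alpha$ as a continuous (smooth) union of locally virtually cyclic subgroups of strictly smaller cardinality; by induction each $\Gamma_\alpha$ admits an $n$-dimensional model for $\underline{\underline{E}}\Gamma_\alpha$, and a transfinite telescope assembled along the smooth chain produces an $(n+1)$-dimensional model for $\underline{\underline{E}}\Gamma$, paralleling the construction of Dicks--Kropholler--Leary--Thomas for proper actions. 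The main obstacle is this transfinite assembly step: the models for the $\Gamma_\alpha$'s must be chosen coherently so that the colimit remains a $\Gamma$-CW-complex whose dimension increases by exactly one and does not accumulate at limit ordinals, with the cardinality assumption $|\Gamma|=\aleph_n$ being precisely what controls the induction.
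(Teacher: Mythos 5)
Your decomposition of $G\rtimes_{\varphi}\mathbb{Z}$ into the directed union of the virtually cyclic subgroups $F\rtimes_{\varphi}\mathbb{Z}$ (with $F$ the finite $\varphi$-invariant subgroup generated by the $\varphi$-orbits of a finite set), the passage to the finite-index overgroup $\Gamma$ via $H\cap(G\rtimes_{\varphi}\mathbb{Z})$, and the lower bound obtained by restricting a model for $\underline{\underline{E}}\Gamma$ to $G$ and invoking \cite[2.6]{DicksKrophollerLearyThomas} are exactly the paper's arguments (the paper phrases the lower bound as $\underline{\mathrm{gd}}(G)=\underline{\underline{\mathrm{gd}}}(G)\leq\underline{\underline{\mathrm{gd}}}(\Gamma)$, which is the same restriction argument). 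The only genuine divergence is the upper bound $\underline{\underline{\mathrm{gd}}}(\Gamma)\leq n+1$: the paper obtains it in one line by quoting Theorem $5.31$ of \cite{LuckWeiermann}, which says precisely that a group of cardinality $\aleph_n$ whose finitely generated subgroups are all virtually cyclic satisfies this bound, whereas you set out to reprove that theorem by transfinite induction in the style of Dicks--Kropholler--Leary--Thomas. Your base case $n=0$ is sound (the telescope of a sequence of discrete $\Gamma$-sets is a forest whose set of components is the colimit, and fixed points of finitely generated subgroups commute with directed colimits), and the inductive step you outline is exactly the construction those authors carry out; but as written you leave the transfinite assembly as an acknowledged obstacle, so to make the proof complete you should either fill in that step or, more efficiently, note that once $\Gamma$ is shown to be locally virtually cyclic the bound is literally the statement of the cited theorem. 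What the citation buys is brevity; what your sketch buys is a self-contained picture of why the cardinality hypothesis enters.
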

\begin{proof} We will first show that  $G \rtimes_{\varphi} \mathbb{Z}$ is a directed union of  virtually cyclic subgroups.

Let $\mathcal{P}_{fin}(G)$ be the set of finite subsets of $G$ and consider an arbitrary subset $S=\{x_1,\ldots,x_s\} \in \mathcal{P}_{fin}(G)$. Since $G \rtimes_{\varphi} \mathbb{Z}$ is locally bounded,  for each $i \in \{1,\ldots,s\}$, there exists an integer $n_i \in \mathbb{N}_1$ such that $\varphi^{n_i}(x_i)=x_i$. Let $F_S$ be the subgroup of $G$ generated by the set of elements \[ \{\varphi^{j_i}(x_i) \ | \ i\in \{1,\ldots,s\} \ \mbox{and} \ j_i\in \{1,\ldots,n_{i}\}\}.\] Since $G$ is locally finite, $F_S$ is a finite group for each $S \in \mathcal{P}_{fin}(G)$. One also easily verifies that $\varphi(F_S)=F_S$ for each $S  \in \mathcal{P}_{fin}(G)$, and that $F_S \subseteq F_{S'}$ if $S \subseteq S'$.
It follows that we can use $\varphi$ to construct the semi-direct products $H_S:=F_S\rtimes \mathbb{Z}$ for all $S \in \mathcal{P}_{fin}(G)$, such that
\[ \xymatrix{
1 \ar[r] & F_S \ar[d] \ar[r] & H_S \ar[d] \ar[r] & \mathbb{Z} \ar[d]^{\mathrm{Id}} \ar[r] & 0 \\
1 \ar[r] & G  \ar[r] & G\rtimes_{\varphi} \mathbb{Z}  \ar[r] & \mathbb{Z}  \ar[r] & 0,
} \]
commutes and the vertical maps are inclusions. The set $\{H_S \ | \ S \in \mathcal{P}_{fin}(G)\}$ ordered by inclusion is a directed set of virtually cyclic subgroup of $G \rtimes_{\varphi} \mathbb{Z}$,
because $H_{S_1}$ and $H_{S_2}$ are both contained in $H_{S_1 \cup S_2}$. Since every element of $G\rtimes_{\varphi} \mathbb{Z}$ is contained in some $H_S$, it follows that $G \rtimes_{\varphi} \mathbb{Z}$ is the directed union of the virtually cyclic groups $H_S$ for all $S \in \mathcal{P}_{fin}(G)$.
One can therefore conclude that the family  of finitely generated subgroups of $G \rtimes_{\varphi} \mathbb{Z}$ coincides with the family of virtually cyclic subgroups.

Now, by Theorem $5.31$ of \cite{LuckWeiermann}, we have that $\underline{\underline{\mathrm{gd}}}(G \rtimes_{\varphi} \mathbb{Z})\leq n+1$. Since $\underline{\mathrm{gd}}(G)= n+1$ (see \cite[2.6]{DicksKrophollerLearyThomas}) and $\underline{\mathrm{gd}}(G)= \underline{\underline{\mathrm{gd}}}(G)\leq \underline{\underline{\mathrm{gd}}}(G \rtimes_{\varphi} \mathbb{Z})$, it follows that $\underline{\underline{\mathrm{gd}}}(G \rtimes_{\varphi} \mathbb{Z})= n+1$.

Finally, let us suppose that a group $\Gamma$ contains $G \rtimes_{\varphi} \mathbb{Z}$ as a subgroup of finite index.
We will show that $\Gamma$ is locally virtually cyclic. This is equivalent with $\Gamma$ being a directed union of virtually cyclic subgroups and implies that $\underline{\underline{\mathrm{gd}}}(\Gamma)= n+1$, as before.

Let $K$ be a finitely generated subgroup of $\Gamma$.
It follows that $K_0=K \cap (G \rtimes_{\varphi} \mathbb{Z})$ is a finite index subgroup of $K$ and is therefore also finitely generated.
But then $K_0$ is virtually cyclic because $G \rtimes_{\varphi} \mathbb{Z}$ is locally virtually cyclic.
Since  $K$  is a finite extension of $K_0$ it is also virtually cyclic. This shows that $\Gamma$ is locally virtually cyclic.
\end{proof}
We are now ready to prove Theorem A.
\begin{proof}[Proof of Theorem A]
Recall that every virtually cyclic group is either finite, finite-by-infinite cyclic or finite-by-infinite dihedral. Hence, there exists a locally finite group $G$ of cardinality $\aleph_n$ such that $\Gamma$ is either $G$, $G$-by-infinite cyclic or $G$-by-infinite dihedral. If $\Gamma$ is locally finite, then  by  Theorem $2.6$ of \cite{DicksKrophollerLearyThomas}, $\underline{\underline{\mathrm{gd}}}(\Gamma)=\underline{\mathrm{gd}}(\Gamma)= n+1$.

Now, let us assume that $\Gamma$ is not locally finite. Let $\mathcal{S}$ be the set of infinite virtually cyclic subgroups of $\Gamma$ and equip $\mathcal{S}$ with the equivalence relation $\sim$ discussed in Section \ref{sec: comm}. Recall that this is a strong equivalence relation. Denote an equivalence class represented by $H \in \mathcal{S}$ by $[H]$, and denote the set of equivalence classes by $[\mathcal{S}]$. Note that $\Gamma$ acts on $[\mathcal{S}]$ via conjugation and the stabilizer subgroup of an equivalence class $[H]$ under this action is exactly the commensurator $\mathrm{Comm}_{\Gamma}[H]$ of any representative $H$, i.e. $\mathrm{N}_{\Gamma}[H]=\mathrm{Comm}_{\Gamma}[H]$.
Denote by $\mathcal{I}$ a complete set of representatives $[H]$ of the $\Gamma$-orbits of $[\mathcal{S}]$ and define for each $[H] \in \mathcal{I}$ the following family of subgroups of $\mathrm{Comm}_{\Gamma}[H]$:
\[ \mathfrak{F}[H]=\{ K \subseteq\mathrm{Comm}_{\Gamma}[H] \ | K \in \mathcal{S}, K \sim H\} \cup \{ K \subseteq\mathrm{Comm}_{\Gamma}[H] \ | \ |K| < \infty \}. \]

We claim that there exists a model for $E_{\mathfrak{F}[H]}(\mathrm{Comm}_{\Gamma}[H])$ of dimension at most $n+1$, for each $[H] \in \mathcal{I}$. To prove this, first let us assume that $\Gamma$ is locally finite-by-infinite cyclic. It follows from  Proposition \ref{prop: comm}(a) that for each $[H] \in \mathcal{I}$, $\mathrm{Comm}_{\Gamma}[H]$ is isomorphic to some semi-direct product $T\rtimes \mathbb{Z}$ that is locally bounded, with $T$ locally finite of cardinality at most $\aleph_n$. It follows from  Proposition \ref{prop: comm}(b) that the family $\mathfrak{F}[H]$ coincides with the family of virtually cyclic subgroups of $\mathrm{Comm}_{\Gamma}[H]$. Therefore, Proposition \ref{prop: directed union} implies that there exists a model for $E_{\mathfrak{F}[H]}(\mathrm{Comm}_{\Gamma}[H])$ of dimension $n+1$, for each $[H] \in \mathcal{I}$.

If $\Gamma$ is not locally finite-by-infinite cyclic, then $\Gamma$ must have an index $2$ subgroup $\Gamma_0$ that is locally finite-by-infinite cyclic. Note that in this case, each $[H] \in \mathcal{I}$ can be represented by an infinite virtually cyclic subgroup $H$  contained in $\Gamma_0$, and that $\mathrm{Comm}_{\Gamma_0}[H]=\mathrm{Comm}_{\Gamma}[H]\cap \Gamma_0$. This implies that $\mathrm{Comm}_{\Gamma_0}[H]$ is a subgroup of $\mathrm{Comm}_{\Gamma}[H]$ of index at most $2$. We leave it to the reader to check, using   Proposition \ref{prop: comm}(b) and the observations above, that the family $\mathfrak{F}[H]$ coincides with the family of virtually cyclic subgroups of $\mathrm{Comm}_{\Gamma}[H]$. It now follows from  Proposition \ref{prop: comm}(a) and Proposition \ref{prop: directed union} that there exists a model for $E_{\mathfrak{F}[H]}(\mathrm{Comm}_{\Gamma}[H])$ of dimension at most $n+1$, for each $[H] \in \mathcal{I}$. This proves the claim.

By Lemma \ref{lemma: bound finite}, there is model for $\underline{E}\Gamma$ of dimension  $n+2$. This is also a model for $\underline{E}\mathrm{Comm}_{\Gamma}[H]$ for each $[H] \in \mathcal{I}$.
Applying Theorem \ref{cor: push out}, yields a model for $\underline{\underline{E}}G$ of dimension  $n+3$.
Since  $n+1=\underline{\mathrm{gd}}(G)\leq \underline{\underline{\mathrm{gd}}}(\Gamma)$, we obtain
\[  n+1 \leq \underline{\underline{\mathrm{gd}}}(\Gamma) \leq n+3.\]\end{proof}

\begin{corollary} \label{cor: ext of loc fin} Suppose we have a group extension
\[ 1 \rightarrow G \rightarrow \Gamma \xrightarrow{\pi} Q \rightarrow 1 \]
such that $G$ is locally finite of cardinality $\aleph_n$. Then \[ \underline{\underline{\mathrm{gd}}}(\Gamma) \leq n + 3 + \underline{\underline{\mathrm{cd}}}(Q) .\]
\end{corollary}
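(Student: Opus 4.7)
The plan is to interpolate between the families of finite subgroups and of virtually cyclic subgroups via the intermediate family
\[
 \mathfrak{H} = \{ H \leq \Gamma \mid \pi(H) \text{ is virtually cyclic in } Q \}.
\]
Every virtually cyclic subgroup of $\Gamma$ belongs to $\mathfrak{H}$, so the family of virtually cyclic subgroups of $\Gamma$ is contained in $\mathfrak{H}$. Any $H \in \mathfrak{H}$ sits in a short exact sequence $1 \to H \cap G \to H \to \pi(H) \to 1$, where $H \cap G$ is locally finite of cardinality at most $\aleph_n$ and $\pi(H)$ is virtually cyclic, so Theorem~A (together with the trivial case where $H\cap G$ is finite, which makes $H$ itself virtually cyclic) gives $\underline{\underline{\mathrm{gd}}}(H) \leq n+3$, and in particular $\underline{\underline{\mathrm{cd}}}(H) \leq n+3$, for every $H \in \mathfrak{H}$.

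Next I would bound $\mathrm{cd}_{\mathfrak{H}}(\Gamma)$ by $\underline{\underline{\mathrm{cd}}}(Q)$. The assignment $\Gamma/H \mapsto Q/\pi(H)$ extends to a functor $\bar\pi \colon \mathcal{O}_{\mathfrak{H}}\Gamma \to \mathcal{O}_{\mathfrak{V}}Q$, where $\mathfrak{V}$ denotes the family of virtually cyclic subgroups of $Q$, and precomposition with $\bar\pi$ defines an exact functor on the associated module categories that commutes with arbitrary direct sums. A short calculation of $H$-fixed points shows that the natural map $\Gamma/\pi^{-1}(V) \to Q/V$ induces an isomorphism between the representable module at $\Gamma/\pi^{-1}(V) \in \mathfrak{H}$ and $\bar\pi^{\ast}$ applied to the representable module at $Q/V$. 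Consequently, $\bar\pi^{\ast}$ carries any projective resolution of $\underline{\mathbb{Z}}$ over $\mathcal{O}_{\mathfrak{V}}Q$ to a projective resolution of $\underline{\mathbb{Z}}$ over $\mathcal{O}_{\mathfrak{H}}\Gamma$ of the same length, and hence $\mathrm{cd}_{\mathfrak{H}}(\Gamma) \leq \underline{\underline{\mathrm{cd}}}(Q)$.

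To finish, I would invoke the standard Bredon cohomological gluing inequality
\[
 \underline{\underline{\mathrm{cd}}}(\Gamma) \;\leq\; \mathrm{cd}_{\mathfrak{H}}(\Gamma) \;+\; \sup_{H \in \mathfrak{H}} \underline{\underline{\mathrm{cd}}}(H),
\]
the cohomological analogue of Proposition~5.1 of \cite{LuckWeiermann}, obtained by splicing a Bredon projective resolution for $\underline{\mathbb{Z}}$ over $\mathcal{O}_{\mathfrak{H}}\Gamma$ with projective resolutions for the stabilizer classifying spaces. Combined with the previous two paragraphs, this yields $\underline{\underline{\mathrm{cd}}}(\Gamma) \leq \underline{\underline{\mathrm{cd}}}(Q) + n + 3$, and since $n+3 \geq 3$, Theorem~\ref{th: luck meintrup} delivers $\underline{\underline{\mathrm{gd}}}(\Gamma) \leq n+3 + \underline{\underline{\mathrm{cd}}}(Q)$. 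The subtle point, and the main obstacle to a more naive argument, is that the corresponding geometric-dimension gluing only yields $\underline{\underline{\mathrm{gd}}}(\Gamma) \leq \underline{\underline{\mathrm{gd}}}(Q) + n + 3$, which overshoots the stated bound by up to three when $\underline{\underline{\mathrm{cd}}}(Q) < 3$; one is therefore forced to work with Bredon cohomological dimension throughout and to apply L\"uck--Meintrup only at the very last step.
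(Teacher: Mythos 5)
Your argument is correct and follows essentially the same route as the paper: the paper's one-line proof combines Corollary 5.2 of Mart\'{\i}nez-P\'{e}rez's spectral sequence paper (which is precisely the Bredon cohomological gluing inequality you sketch, specialized to the family of preimages of virtually cyclic subgroups of $Q$) with Theorem A applied to those preimages and with the L\"uck--Meintrup theorem at the very end. The only difference is that you re-derive the cited black box via the pullback functor $\bar\pi^{\ast}$ and a splicing argument, and you correctly identify the key point that one must work with $\underline{\underline{\mathrm{cd}}}$ throughout and convert to $\underline{\underline{\mathrm{gd}}}$ only at the last step.
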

\begin{proof}
The statement follows by combining Corollary 5.2 of \cite{Martinez}, Theorem A and Theorem \ref{th: luck meintrup}.
\end{proof}
We can now prove Theorem B.

\begin{proof}[Proof of Theorem B] It follows from statement (g) of \cite{Wehrfritz} (see also \cite{HillmannLinnel}) that there exist characteristic subgroups $\Lambda(\G)\subseteq N\subseteq M$ of $\G$ and a function $j: \mathbb{N} \rightarrow \mathbb{N}$ such that
\begin{itemize}
\item[-] $\Lambda(\G)$ is the unique maximal normal locally finite subgroup of $\G$, i.e. the locally finite radical;
\item[-] $N/\Lambda(\G)$ is torsion-free nilpotent;
\item[-] $M/N$ is free abelian of finite rank;
\item[-] $[\G:M] \leq j(h)$.
\end{itemize}

By Theorem \ref{th: finite index}, it suffices to prove that $\underline{\underline{\mathrm{gd}}}(M)$ is bounded above by an integer valued function of $h$. Also by the above, we have a short exact sequence
\[ 1 \rightarrow N/\Lambda(\G) \rightarrow M/\Lambda(\G) \rightarrow M/N \rightarrow 1. \]
Since the nilpotency class of any torsion-free nilpotent group is at most its Hirsch length, a close inspection of the proof of Theorem $6.2$ of \cite{FluchNucinkis} immediately implies that $\underline{\underline{\mathrm{cd}}}(M/\Lambda(\G))\leq g(h)$ for some function $g: \mathbb{N}_0\to \mathbb N_0$.

Now, let us consider the short exact sequence
\[ 1 \rightarrow \Lambda(\G) \rightarrow M \rightarrow  M/\Lambda(\G) \rightarrow 1. \]
Since $\Lambda(\G)$ is a locally finite group of cardinality at most $\aleph_n$, it follows from Corollary \ref{cor: ext of loc fin} that $\underline{\underline{\mathrm{gd}}}(M) \leq n+3+ g(h)$. This finishes the proof.
\end{proof}

$${\bf Acknowledgment}$$

The authors wish to thank  Brita Nucinkis for introducing them to the problems discussed in this paper.


\begin{thebibliography}{12345}
\bibitem{Bredon} Bredon, G.E.,
    {\em Equivariant cohomology theories}, Lecture Notes in Mathematics $34$, Springer ($1967$).
\bibitem{DicksKrophollerLearyThomas} Dicks, W., Kropholler, P.H., Leary, I.J., and Thomas, S.,
    {\em Classifying spaces for proper actions of locally-finite groups}, J. Group Theory, $5$($4$) $(2002)$, $453$--$480$.
\bibitem{DL} Davis, J. F. and L\"{u}ck, W.,
    {\em Spaces over a Category and Assembly Maps in Isomorphism Conjectures in $K$- and $L$- Theory}, $K$-theory $15$ ($1998$), $201$--$252$.
    \bibitem{FJ} Farrell, and F.T., Jones, L.E., {\em Isomorphism conjectures in algebraic K-theory}, J. Amer. Math. Soc., Vol. $6$, ($1993$), $249$--$297$.
\bibitem{FluchNucinkis}  Fluch, M., and Nucinkis, B.,
    {\em On classifying spaces for the family of virtually cyclic subgroups for elementary amenable groups}, preprint, arxiv.
\bibitem{HillmannLinnel} Hillman, J.A., and Linnell, P.A.,
    {\em Elementary amenable groups of finite Hirsch length are locally-finite by virtually-solvable}, J. Austral. Math. Soc. Ser. $A$, Vol. $52$, Nr. $2$ $(1992)$, $237$-$241$
\bibitem{LafontOrtiz} Lafont, J.F., and Ortiz, I.J.,
    {\em Relative hyperbolicity, classifying spaces, and lower algebraic K-theory},  Topology, Vol. $46$ Nr. $6$ $(2007)$, $527$--$553$.
\bibitem{LearyPineda} Juan-Pineda, D., and Leary, I.J.,
    {\em On classifying spaces for the family of virtually cyclic subgroups}, Recent developments in algebraic topology, Vol. $407$ of Contemp. Math., Amer. Math. Soc. $(2006)$, $135$--$145$.
\bibitem{Luck} L\"{u}ck, W.,
    {\em Transformation groups and algebraic K-theory}, Lecture Notes in Mathematics, Vol. $1408$, Springer-Berlin $(1989)$.
\bibitem{Luck1}  ---,
    {\em The type of the classifying space for a family of subgroups}, J. Pure Appl. Algebra $149$ ($2000$), $177$--$203$.
\bibitem{Luck2} ---,
    {\em Survey on classifying spaces for families of subgroups}, Infinite groups: geometric, combinatorial and dynamical aspects, Springer $(2005)$, $269$--$322$.
\bibitem{Luck3} ---,
    {\em On the classifying space of the family of finite and of virtually cyclic subgroups for CAT(0)-groups}, M\"{u}nster J. of Math. $2$ ($2009$), $201$--$214$.
\bibitem{LuckMeintrup} L\"{u}ck, W., and Meintrup, D.,
    {\em On the universal space for group actions with compact isotropy}, Geometry and topology: Aarhus ($1998$), $293$--$305$.
\bibitem{LuckReich}  L\"{u}ck, W. and Reich H.,
    {\em The Baum-Connes and the Farrell-Jones conjectures in K- and L-theory}, Handbook of K-theory. Vol. $2$, Springer, Berlin ($2005$), $703$--$842$.
    \bibitem{LuckWeiermann} L\"{u}ck, W.,  and Weiermann, M.,
    {\em On the classifying space of the family of virtually cyclic subgroups}, Pure and Applied Mathematics Quarterly, Vol. $8$ Nr. $2$ $(2012)$.
\bibitem{Martinez} Mart\'{\i}nez-P\'{e}rez, C.,
    {\em A spectral sequence in Bredon (co)homology}, J. Pure Appl. Algebra $176$ $(2002)$, $161$--$173$.
\bibitem{tom} Dieck, T.,
    {\em Transformation groups}, de Gruyter $8$ ($1987$).
\bibitem{Wehrfritz} Wehrfritz, B.,
    {\em On elementary amenable groups of finite Hirsch number}, J. Austral. Math. Soc. Ser. A
    Vol. $58$ Nr. $2$ ($1995$), $219$--$221$.
\end{thebibliography}
\end{document}